\author{Enrico Fatighenti, Luca Rizzi and Francesco Zucconi}
\title{Weighted Fano Varieties and infinitesimal Torelli problem}
\keywords{Torelli problem, quasi-smooth Fano threefolds, Gushel-Mukai type varieties}
\subjclass[2010]{14C34, 14D07, 14J30, 14J45, 14J70}
\newcommand{\C}{\mathbb{C}}
\newcommand{\A}{\mathbb{A}}
\newcommand{\Q}{\mathbb{Q}}
\newcommand{\Z}{\mathbb{Z}}
\newcommand{\PP}{\mathbb{P}}
\newcommand{\mbP}{\mathbb{P}}
\newcommand{\of}{\mathcal{O}}
\DeclareMathOperator{\Hom}{Hom}
\DeclareMathOperator{\ddeg}{deg}
\DeclareMathOperator{\dd}{d}
\DeclareMathOperator{\Sym}{Sym}
\DeclareMathOperator{\ddim}{dim}
\DeclareMathOperator{\prim}{prim}
\newtheorem{thm}{Theorem}[section]
\newtheorem{corollary}[thm]{Corollary}
\newtheorem{question}[thm]{Question}
\newtheorem{lemma}[thm]{Lemma}
\newtheorem{proposition}[thm]{Propostion}
\newtheorem{defi}[thm]{Definition}
\newtheorem{rmk}[thm]{Remark}
\newtheorem{ex}[thm]{Example}
\begin{document}
\maketitle





\begin{abstract}
We solve the infinitesimal Torelli problem for $3$-dimensional quasi-smooth ${\mathbb{Q}}$-Fano hypersurfaces with at worst terminal singularities. We also find infinite chains of double coverings of increasing dimension which alternatively distribute themselves in examples and counterexamples for the infinitesimal Torelli claim and which share the analogue, and in some cases the same, Hodge-diagram properties as the length $3$ Gushel-Mukai chain of prime smooth Fanos of coindex $3$ and degree 10.

\end{abstract}

\section{Introduction}

Minimal model program suggests us to formulate inside the category of varieties with terminal singularities many questions which were initially asked for smooth varieties. The construction of the period map, and the related Torelli type problems are definitely among these problems. 

\subsection{Infinitesimal Torelli} A ${\mathbb{Q}}$-Fano variety is a projective variety $X$ such that $X$ has at worst ${\mathbb{Q}}$-factorial terminal singularities, $-K_{X}$ is ample and ${\rm{Pic}}(X)$ has rank $1$; cf. \cite{cpr}. In this paper we give a full answer to the infinitesimal Torelli problem in the case of quasi-smooth ${\mathbb{Q}}$-Fano hypersurfaces of dimension $3$ with terminal  singularities and with Picard number $1$.

In subsections \ref{subsectioninfinitesimaltorelli} of this paper the reader can find a basic dictionary and up to date references needed to understand the statement of the infinitesimal Torelli problem and where to find the meaning of infinitesimal variation of Hodge structures (IVHS; to short). Here we stress only that the method à la Griffiths based on the extension of the Macaulay's theorem in algebra to the weighted case, cf. \cite{tu}, almost never works in our case.

There is a complete list of quasi-smooth Fano hypersurfaces with $\rho=1$. We have the 'famous 95', that is a list of 95 families of quasi-smooth, anti-canonically polarized Fano threefold of index 1. The list was first considered in \cite{iano}, and later on showed to be complete by \cite[Corollary 2.5]{kollar} see also cf. \cite{chen}. The 'famous 95' families  have been prominent in the context of explicit birational geometry: for a modern survey we refer to \cite{cheltsov}, while for an account of those in term of Hodge theory we refer to \cite{brown}.
We prove:\\

\noindent
{\bf{Theorem [A]}}  {\it{Let $\mathcal{M}$ be the space of quasi-smooth hypersurface of degree d with only terminal singularities in weighted projective space $w\PP=w\PP(1, a_1, a_2, a_3, a_4)$, where $d=\sum_{i=1}^{4}a_i$, modulo automorphisms of  $w\PP$. Then there is an open dense subset of $\mathcal{M}$ on which infinitesimal Torelli holds.}}

\medskip
See Theorem \ref{index1}.

The situation changes drastically when we go to the higher index case. There are 35 families of Fano threefolds of higher index, that we can find listed for example in \cite{okada}, see our Table (\ref{table1}). Out of the 35 , we have $8$ out of them that are rigid and with $h^{2,1}=0$. These are, in particular, the families no. 105, 113, 119, 120, 123, 126, 129, 130, and we will call them \emph{Hodge-rigid}.
We are left with 27 families, and they split with respect to the infinitesimal Torelli property. We prove:\\

\noindent
{\bf{Theorem [B]}} {\it{ Let $\mathcal{M}$ be the space of quasi-smooth weighted hypersurfaces of degree $d$ in weighted projective $4$-space and of index $>1$ modulo automorphisms. Then 
\begin{enumerate}
\item \rm for families no. 115, 121, 122 and 127 \it the infinitesimal Torelli theorem does not hold;
\item the families \rm{no. 105, 113, 119, 120, 123, 126, 129, 130} \it are Hodge-rigid;
\item for the remaining \rm 23 \it  families, there is an open dense subset of $\mathcal{M}$ on which the infinitesimal Torelli holds.
\end{enumerate}}}

\medskip
See Theorem \ref{torelliantitorelli}.

The behaviour with respect to the infinitesimal Torelli problem is summarized in Table (\ref{table1})


We point out that for the four (AT)-cases of Table (\ref{table1}), that is in the cases of failure for the Torelli problem, we are also able to construct explicitly the deformations along which infinitesimal Torelli fails; we call them {\it{anti-Torelli}} deformations. Indeed we get explicit polynomial basis for the vector subspace of 'anti-Torelli' deformations; see subsection (\ref{explicit}).

 \subsection{GM-type infinite towers}
 We consider as one of the main result of this work the explicit construction of infinite towers of quasi-smooth varieties which exhibit a failure of infinitesimal Torelli and simultaneously a behaviour analogous to the Gushel-Mukai varieties deeply studied in a list of recent papers, for example \cite{debarre2}, \cite{debarreilievmanivel}, \cite{GM1}, \cite{GM2}.\\
 Recall first that a Gushel-Mukai variety (in the following, shortened as GM) is the datum of the intersection of the cone over the Grassmannian $G$=Gr(2,5) with an appropriate linear space, and a general global section of $\of_G(2)$. 
%
One curious feature of the GM variety is that the Hodge numbers satisfy a periodic behaviour: the even dimensional varieties of GM-type will satisfy (with one further dimension to remove in the K3 case) $$H^{\textrm{dim}(X)}_{\prim}(X^{\text{even}}, \C) \cong \C \oplus \C^{20} \oplus \C^{}$$ while the odd one $$H^{\textrm{dim}(X)}(X^{\text{odd}}, \C) \cong \C^{10} \oplus \C^{10}.$$ 
%
The fact that the Hodge structure on $H^4(X,\mathbb C)_{{\rm{}}}$ is of ${\rm{K}}3$ type and that the study of the period map for $X$ shares many similarities to the case of cubic fourfold add importance to the study of GM-varieties. 
\medskip

In section \ref{GM} we perform a geometric construction that yields examples that shares common similarities with the above varieties. Although our ambient space is not a Grassmannian variety, we decided to adopt the name of Gushel-Mukai like since we always find Hodge similarities like above, and because the geometrical construction is similar to the GM case.\\ In particular, starting from one of the hereby considered Fano threefold $X_d \subset w\PP$ (or, better, a surface lying in it as quasi-linear section)  we realize a double cover $\varphi: Y \to w\PP$ branched over $X_d$. Iterating this process we get an infinite chain (or 'tower') of varieties, satisfying the periodic equality (with the dichotomy odd/even) of the Hodge groups like in the GM case. Moreover, since (unlike in the GM case) there is no Grassmannian to 'bound' the dimension, our towers can go up and produce examples in any dimension. If the 'even' Hodge structure comes from the Hodge structure of a (weighted) K3 surface, we say that our tower is of \emph{K3-type}.\\
Let us call an \emph{even member} of the tower an $X_d^{2k}$ obtained by doing an even number of step in the construction above, similarly we will define the \emph{odd members}. In Theorem \ref{periodic} we show:
\\

\noindent
{\bf{Theorem [C]}} {\it{Let $X^{0}_d=V(f_0)\subset \PP(a_0,\ldots,a_n)$. Suppose that $d \equiv 0 \ (mod \ 2)$. Let  $X_d^{2k}$ be any even member of the tower, of dimension $n+2k$. There is an isomorphism of IVHS $$\phi: H^{n}(X_d^{0},\C) \stackrel{\sim}{\longrightarrow} H^{n+2k}(X_d^{2k},\C)[-2k].$$ In particular the central Hodge numbers of $X_d^0$ are the same of the Hodge numbers of $X_d^{2k}$ up to a degree $k$ shift, that is  $$\left( h^{n+2k,0}_{X^{2k}_d},h^{n+2k-1,1}_{X^{2k}_d},\dots,h^{1,n+2k-1}_{X^{2k}_d},h^{0,n+2k}_{X^{2k}_d}\right)=\left(0,\dots,0,h^{n,0}_{X^0_d},\dots,h^{0,n}_{X^0_d},0, \dots,0\right),$$ with $2k$ zeros on the last vector. The same holds for odd members, with an equality between the Hodge theory of $X_d^1$ and $X_d^{2k+1}$, for any $k$.}}\\ 



One of the most interesting case is the construction of an infinite tower whose initial odd element is a general quartic double solid \ref{quarticdouble}; there is a strong renew of interest about the (ir)rationality problem on quartic double solid, see \cite{cheltsov2}, \cite{yuri}, after Voisin's result that for any integer $k = 0,\ldots,7$, a very general nodal quartic double solid with k nodes is not stably rational; see: \cite{voisin2}. Actually we show that up to dimension $4$ onwards the members of our tower are rational; see Proposition \ref{razraz}. 

Nevertheless, for us, the most beautiful case is the following one which, in a further analogy with the GM case, holds an infinite series of examples and counterexamples to the Torelli problem according to the parity of the dimension.\\

\noindent
 {\bf{Theorem [D]}}{\it{ Any quasi-smooth odd dimensional member $X_{14}^{2k+1} \subset \PP(2,3,4,5,7^{2k+1})$ is of Anti-Torelli type, while any even dimensional quasi-smooth member  $X_{14}^{2k} \subset \PP(2,3,4,5,7^{2k})$ is of Torelli type. In particular we have an infinite chain of GM-type varieties which are examples or counterexamples for the Torelli problem, with alternate dimensions.}}

\medskip
See Theorem \ref{excounterex}.
 
We start now with a recap of known results on weighted projective varieties and Torelli problem.

\section{Preliminaries}
\subsection{Weighted projective hypersurfaces and Jacobian ring}\label{subsectioninfinitesimaltorelli} We denote with $\PP(a_0, \ldots, a_n)$ or $w\PP$ the weighted projective space with variables $x_0, \ldots, x_n$ with wt ($x_i)=a_i$. For the standard theory of weighted projective spaces we refer to \cite{miglia} and \cite{dolgachev82}.\\
We recall that the weighted projective space $\PP(a_0, \ldots, a_n)$ is said to be \emph{well-formed} if $$\textrm{hcf}(a_0,\ldots, \hat{a_i}, \ldots, a_n)=1 \ \textrm{ for each } i$$
We want now to define a reasonable class of varieties in a (well-formed) weighted projective space, that will play the role of the smooth one in straight case. We have the following definition
\begin{defi} Let $X$ be a closed subvariety of a well-formed weighted projective space, and let us denote by $A_X$ the affine cone over $X$, that is the completion of $A^{\bullet}_X = \pi^{-1}(X)$, where $\pi: \A^{n+1} \smallsetminus 0  \longrightarrow w\PP$ is the canonical projection. We say that $X$ is \emph{quasi-smooth} if $A_X$ is smooth outside its vertex $\underline{0}$.
\end{defi}
The above definition is telling us that the only (possible) singularities of $X$ come just from the automorphisms of the ambient space. The key idea is that a quasi-smooth subvariety $X \subset w\PP$ is a \emph{V-manifold}, that is isomorphic to the quotient of a complex manifold by a finite group of holomorphic automorphisms. 
Operative definitions of quasi-smoothness can be found, for example, in \cite[Theorem 8.1]{iano}.



A $\Q$-Fano 3-fold here means a
projective 3-fold with only terminal singularities whose anticanonical divisor is ample.
We assume as well (this being always satisfied in the hypersurface case) that the rank $\rho$ of its second Betti homology is $1$.
The Fano index $\iota_X$ of a Fano 3-fold X is

$$\iota_X=\textrm{max} \lbrace m \in \Z >0 \ | \ -K_X =mA \textrm{ for some Weil divisor A } \rbrace$$
 A Weil divisor $A$ for which $-K_X =\iota_X A$ is called a primitive ample divisor.\\
 Recall that if $X_d=V(f)$ is a hypersurface of degree d, we can associate to $X_d$ a canonical deformation ring, the Jacobian ring
   $$R_f:=\mathbb C[x_0,\ldots,x_{n+1}]/J(f),$$
   where $J(f)$ is its Jacobian ideal, that is, in both smooth and quasi-smooth cases, the (graded homogeneous) ideal generated by the partial derivatives of $f$.\\
   This is indeed finite dimensional as vector space if and only if $X$ is quasi-smooth. The Jacobian ring be interpreted as the ring of infinitesimal deformation of the affine cone $A_X$ (see \cite{io} for a more general treatment).
A well-known feature of the Jacobian ring of a hypersurface is that some homogeneous slices of it can be identified with the (primitive) cohomology of the underlying $X$. This is the content of the \emph{Griffiths Residue theorem} (\cite{dolgachev82} or for a modern account, see \cite{carlson}). In particular we have that, if $H^{p,n-p}_{\prim}(X)$ is the $(p,n-p)$-primitive part of $H^{n}(X,\mathbb C)$, then
$$H^{p, n-p}_{\prim}(X) \cong R_{(n-p+1)d-s},$$ $s=\sum_{i=0}^{n+1}a_i$ is the sum of the weights and $d$ is the degree of $f$.\\

Griffiths description of the primitive cohomology for a smooth hypersurface is a powerful tool to attack the Torelli problem. The problem was originally solved by Donagi, cf. \cite{donagi}. A modern survey can be found, for example, in Claire Voisin's book \cite{voisin2}, that here we recall briefly.\\
 Assume that $X_d$ is a smooth hypersurface of degree $d$ in $\PP^{n+1}$. Assume that the dimension of $X$ is at least 3 (in particular, all the deformations of $X$ are projective). Thanks to the Lefschetz hyperplane the only interesting part of the Hodge structure of $X$ is located in degree $n$. The starting point is realizing how the differential of the period map for an Hodge structure of weight $n$ restricted to its primitive subspaces $$\dd \wp: H^1(T_X) \rightarrow \bigoplus_{p=0}^n \Hom(H^{p,q}_{\prim}(X), H^{p-1,q+1}_{\prim}(X)),$$
 can be rewritten thanks to the Griffiths description as
 $$ R_d \longrightarrow \bigoplus_{p=0}^n \Hom(R_{(p+1)d-n-2}, R_{(p+2)d-n-2}),$$
 where $R$ denote as usual the Jacobian ring and the subscript refers to its homogeneous components.\\
We can rephrase this using local duality theorem (Theorem 2.2 of \cite{tu}).
First recall the concept of \emph{socle}. For a graded $k$-Algebra $A$, the socle is defined as $$ \textrm{Soc}(A)= \lbrace  h \in A \ | \ hg=0 \textrm{ for all } g \in \bigoplus A_i \rbrace.$$ In general the socle could be either empty or in different degrees, but in case of the Jacobian ring it coincides with a specific degree component $R_{\sigma}$, cf. Corollary A3, \cite{tu}.  One has in particular $R_{\sigma+j}=0$ for every $j>0$. The local duality theorem is then
 \begin{thm}[Thm 2.2 in \cite{tu}] \label{localduality} Let $f_0, \ldots, f_{n+1}$ a regular sequence of weighted homogeneous polynomial in $\C[x_0, \ldots, x_{n+1}]$ and let $$A=\C[x_0, \ldots, x_{n+1}]/ (f_0, \ldots, f_{n+1}).$$ Suppose $a_i$ is the weight of $x_i$ and $d_i= $ deg $f_i$. Then for any $0 \leq a \leq \sigma$ the pairing given by multiplication $$ A_a \times A_{\sigma-a} \longrightarrow A_{\sigma}$$ is non-degenerate, where $\sigma= \sum d_i-a_i$ is the top degree.
 \end{thm}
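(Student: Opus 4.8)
The plan is to recognize Theorem \ref{localduality} as the statement that the graded Artinian algebra $A$ is \emph{Gorenstein} with one-dimensional socle concentrated in degree $\sigma=\sum_i(d_i-a_i)$, and then to read off the non-degeneracy of the multiplication pairing as the defining duality of such an algebra. First I would record that, since $f_0,\dots,f_{n+1}$ is a regular sequence of $n+2$ homogeneous elements in the $(n+2)$-dimensional weighted polynomial ring $S=\C[x_0,\dots,x_{n+1}]$, the quotient $A$ is a zero-dimensional complete intersection; in particular $A$ is a finite-dimensional graded $\C$-algebra, so $A_j=0$ for $j\gg 0$.

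The structural input is the Koszul complex $K_\bullet=K_\bullet(f_0,\dots,f_{n+1};S)$. Because the $f_i$ form a regular sequence, $K_\bullet$ is a graded free resolution of $A$ whose top term is $\bigwedge^{n+2}\big(\bigoplus_i S(-d_i)\big)=S(-\sum_i d_i)$. The Koszul complex is self-dual: applying $\Hom_S(-,S)$ returns $K_\bullet$ up to a homological shift by $n+2$ and a twist by $S(\sum_i d_i)$. Since $A$ has codimension $n+2$, this computes the single non-vanishing Ext module, $\Ext^{n+2}_S(A,S)\cong A(\sum_i d_i)$. Feeding in the canonical module of the weighted polynomial ring, $\omega_S=S(-\sum_i a_i)$ (this is where the weights enter), graded local duality gives the canonical module of $A$ as
$$\omega_A=\Ext^{n+2}_S(A,\omega_S)\cong A\Big(\textstyle\sum_i d_i-\sum_i a_i\Big)=A(\sigma).$$

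The isomorphism $\omega_A\cong A(\sigma)$ of graded $A$-modules is exactly the Gorenstein condition, and it pins down the socle: $A_\sigma\cong\C$ is one-dimensional, $A_j=0$ for $j>\sigma$, and $\dim_\C A_a=\dim_\C A_{\sigma-a}$ for all $a$. The perfect pairing follows, since $A\cong\omega_A(-\sigma)=\Hom_\C(A,\C)(-\sigma)$ yields $A_a\cong(A_{\sigma-a})^{*}$; equivalently, the multiplication $A_a\times A_{\sigma-a}\to A_\sigma\cong\C$ is non-degenerate, i.e.\ for every nonzero $g\in A_a$ there is $h\in A_{\sigma-a}$ with $gh\neq 0$ in $A_\sigma$. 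Concretely this pairing is realized by the Grothendieck residue
$$\langle g,h\rangle=\Res\begin{bmatrix} gh\,dx_0\wedge\cdots\wedge dx_{n+1}\\ f_0,\ \dots,\ f_{n+1}\end{bmatrix},$$
the classical local duality residue that gives the theorem its name.

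The step I expect to be the main obstacle is not the conceptual chain (regular sequence $\Rightarrow$ complete intersection $\Rightarrow$ Gorenstein $\Rightarrow$ perfect pairing), which is standard, but the careful bookkeeping of the graded twists in the weighted setting: one must correctly identify $\omega_S=S(-\sum_i a_i)$ and track the shift through the self-duality of $K_\bullet$ so that the socle lands \emph{exactly} in degree $\sigma=\sum_i d_i-\sum_i a_i$ and nowhere else. An alternative that sidesteps the Koszul duality is to induct on the number of generators, using the short exact sequences $0\to A'(-d_{n+1})\xrightarrow{\cdot f_{n+1}}A'\to A\to 0$ with $A'=S/(f_0,\dots,f_n)$ and propagating the Gorenstein and socle-degree statement along the induction; the twist accounting is the delicate point there as well.
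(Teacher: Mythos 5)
Your argument is correct. Note, however, that the paper does not prove this statement at all: it is imported verbatim as Theorem 2.2 of \cite{tu}, so there is no in-paper proof to compare against. What you have written is the standard justification of Tu's local duality --- $n+2$ elements of a regular sequence in the $(n+2)$-variable weighted polynomial ring cut out an Artinian complete intersection, the self-duality of the Koszul resolution identifies $\omega_A$ with $A(\sigma)$ where $\sigma=\sum_i(d_i-a_i)$, and the Gorenstein/Matlis duality $A\cong\Hom_\C(A,\C)(-\sigma)$ as graded $A$-modules is precisely the non-degeneracy of $A_a\times A_{\sigma-a}\to A_\sigma\cong\C$ --- and your twist bookkeeping (including $\omega_S=S(-\sum_i a_i)$, which is where the weights enter) is right. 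The only caveat worth recording is that the non-degeneracy statement requires the identification $A\cong\Hom_\C(A,\C)(-\sigma)$ to be one of $A$-modules, not merely of graded vector spaces, so that the pairing is realized as $(g,h)\mapsto\lambda(gh)$ for a generator $\lambda$ of $(A_\sigma)^{*}$; you implicitly use this and it holds, since the canonical module isomorphism is $A$-linear.
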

 If we pick $A=R$, the Jacobian ring, the theorem applies since the partial derivatives form a regular sequence. \\
 The above theorem allows us to check, instead of the injectivity of $$R_d \longrightarrow \Hom(R_a, R_b)$$ the surjectivity of $$R_b \times R_{\sigma-(a+b)} \to R_{\sigma-a}.$$
However, the above theorem assures the non-degeneracy of the multiplication map only when the socle is involved. \\
The non-degeneracy of the general multiplication map $$ R_a \times R_b \longrightarrow R_{a+b}$$ is indeed tackled by the \emph{Macaulay's theorem}.
\begin{thm}[Thm. 1 in \cite{tu}] Let $f_0, \ldots, f_{n+1}$ be a regular sequence of homogeneous polynomials of degree $d_0, \ldots, d_{n+1}$ in $\C[x_0, \ldots, x_{n+1}]$ and let $$R= \C[x_0, \ldots, x_{n+1}]/(f_0, \ldots, f_{n+1}).$$ Then $R$ is a finite dimensional graded $\C$-algebra with top degree $\sigma=\sum(d_i-1)$ and the multiplication map $$ \mu: R_a \times R_b \longrightarrow R_{a+b}$$ is nondegenerate for $a+b \leq \sigma$.
\end{thm}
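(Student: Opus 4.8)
The plan is to deduce Macaulay's theorem from the local duality theorem (Theorem \ref{localduality}), which already handles the extremal case $a+b=\sigma$, and then to propagate nondegeneracy inward using the fact that $R$ is generated in degree $1$. First I would record the setup: since $f_0,\dots,f_{n+1}$ is a regular sequence of $n+2$ elements in the polynomial ring $\C[x_0,\dots,x_{n+1}]$ in $n+2$ variables, the quotient $R$ is a graded Artinian complete intersection, hence finite dimensional, with highest nonzero graded piece in degree $\sigma=\sum(d_i-1)$. Applying Theorem \ref{localduality} with $A=R$ (all weights equal to $1$, so that $\sum(d_i-a_i)=\sum(d_i-1)=\sigma$) gives that the pairing $R_a\times R_{\sigma-a}\to R_\sigma$ is nondegenerate: for every $0\neq g\in R_a$ there is some $h\in R_{\sigma-a}$ with $gh\neq 0$. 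This is precisely the assertion for the extremal exponent $b=\sigma-a$, which serves as the base case.

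The key step is to extend this to all $b$ with $a+b\le\sigma$ by descending induction on $b$, starting from $b=\sigma-a$. Here I would use crucially that, all variables having degree $1$, the algebra $R$ is generated in degree $1$, so that $R_{b+1}=R_1\cdot R_b$ for every $b$. Assuming $gR_{b+1}\neq 0$ for some $b$ with $b+1\le\sigma-a$, choose $u\in R_{b+1}$ with $gu\neq 0$ and write $u=\sum_j \ell_j w_j$ with $\ell_j\in R_1$ and $w_j\in R_b$. Then $gu=\sum_j \ell_j (gw_j)$, so $gw_j\neq 0$ for at least one $j$, and hence $gR_b\neq 0$. Descending from $b=\sigma-a$ down to $b=0$ shows $gR_b\neq 0$ for every $0\neq g\in R_a$ and every $b$ with $a+b\le\sigma$, that is, the induced map $R_a\to\Hom(R_b,R_{a+b})$ is injective. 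Swapping the roles of $a$ and $b$, which is legitimate since the hypothesis $a+b\le\sigma$ is symmetric, yields nondegeneracy on the other side as well, completing the argument.

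The main obstacle, and really the conceptual heart of the statement, is this reduction step, which is where the standard grading is indispensable: the descending induction works only because $R$ is generated in degree $1$, so that nondegeneracy against $R_{b+1}$ forces nondegeneracy against $R_b$. In the genuinely weighted setting this generation property fails, the multiplication maps can acquire kernels away from the socle, and only the duality pairing into $R_\sigma$ survives. This is exactly the phenomenon flagged in the introduction, namely that the Macaulay-type argument ``almost never works'' for weighted hypersurfaces, which is why the main results must rely on Theorem \ref{localduality} alone rather than on full nondegeneracy of $\mu\colon R_a\times R_b\to R_{a+b}$.
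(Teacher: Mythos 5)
The paper does not prove this statement: it is quoted from Tu \cite{tu} and used as a black box, so there is no in-paper argument to compare against. Your proof is correct and is in substance the classical derivation of Macaulay's theorem from Gorenstein duality: the Artinian complete intersection $R$ has socle in degree $\sigma=\sum(d_i-1)$, Theorem \ref{localduality} (with all weights equal to $1$) settles the extremal case $b=\sigma-a$, and since $R$ is standard graded one indeed has $R_{b+1}=R_1\cdot R_b$, so the descending induction legitimately propagates $gR_{b+1}\neq 0$ to $gR_b\neq 0$; combined with the symmetry in $a$ and $b$ this gives nondegeneracy of $\mu$ for all $a+b\le\sigma$. Your closing remark also correctly isolates generation in degree $1$ as the step that fails in the weighted case, which is exactly why the paper's arguments rely on Theorem \ref{localduality} and ad hoc computations rather than on a weighted Macaulay theorem.
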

In turn this is enough to guarantee the infinitesimal Torelli theorem for smooth projective hypersurfaces in $\PP^{n+1}$. We focus now on the case of quasi-smooth hypersurfaces in weighted projective spaces.
%

\subsection{Infinitesimal Torelli for weighted hypersurfaces}
A natural extension of the Donagi work is consider the case of quasi-smooth hypersurfaces in weighted projective spaces. Let alone the generic Torelli, one fails to get a general answer even for the infinitesimal Torelli.\\The main obstacle is that the weighted version of Macaulay's theorem does not hold for any weighted projective space $\PP(a_0, \ldots, a_n)$ (more in general, for any weighted graded rings). Counterexamples can be easily given. Consider for example the case of $R=\C[x,y]/(x^2, y^3)$. The multiplication map $\mu: R_1 \times R_3 \to R_4$ is degenerate, since $x_0 \cdot R_3=0$ with $x_0 \neq 0$.\\
Nevertheless on some weighted graded rings some generalization of Macaulay's theorem holds. Based on a detailed analysis, Tu was able to identify in \cite{tu} several classes of quasi-smooth hypersurface for which the infinitesimal Torelli theorem actually holds. We recap briefly here his results.\\For $w\PP=\PP(a_0, \ldots, a_{n+1})$ set $$s= \sum a_i, \ \ m=lcm(a_0, \ldots, a_{n+1}) $$ and for any subset $J=(j_1, \ldots, j_n)$ of $\lbrace 0, \ldots, n+1 \rbrace$, $$m(q|J) =lcm(q_{j_1}, \ldots, q_{j_n}).$$ We define $$G=-s+\frac{1}{n+1}\sum_{2 \leq k\leq n+2} {n \choose k-2}^{-1} \sum_{|J|=k}m(q|J).$$ An estimate for $G$ is $$G\leq -s+m(n+1);$$ in particular we notice that for the standard projective space $\PP^{n+1}$ we have $s=n+2, m=1, G=-1$. What we have is 
\begin{thm}[Theorem 2.8 in \cite{tu}] Let $R=\C[x_0, \ldots, x_{n+1}]/J$ be the weighted ring defined by the ideal $J$ of a regular sequence $f_0, \ldots, f_{n+1}$. Set $d_i= \ddeg f_i$, $a_i=$ weight $x_i$ and $\sigma= \sum (d_i-a_i)$. The natural map $$R_a \to \Hom(R_b, R_{a+b})$$ is injective
\begin{enumerate}
\item if $b$ is a multiple of m and $\sigma-(a+b) \geq \textrm{max}(G+1,0)$, or
\item if $\sigma-(a+b)$ is a multiple of $m$ and $b \geq G+1$.
\end{enumerate}
\end{thm}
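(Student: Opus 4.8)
The plan is to convert the asserted injectivity into a surjectivity statement for multiplication, and then to establish that surjectivity by a monomial-factorization argument in which the numerical invariant $G$ plays the decisive role. Write $\phi\colon R_a\to\Hom(R_b,R_{a+b})$ for the natural map, $\phi(h)(g)=hg$; its kernel is the set of $h\in R_a$ with $hR_b=0$. By hypothesis $f_0,\dots,f_{n+1}$ is a regular sequence, so Theorem \ref{localduality} applies to $R$ and gives, for $0\le a\le\sigma$, a perfect pairing $R_a\times R_{\sigma-a}\to R_\sigma$. Consequently an element $h\in R_a$ vanishes as soon as $hR_{\sigma-a}=0$.

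First I would record the reduction already flagged in the text. Suppose the multiplication map
\[
\mu\colon R_b\times R_{\sigma-(a+b)}\longrightarrow R_{\sigma-a}
\]
is surjective. If $h\in R_a$ satisfies $hR_b=0$, then $hR_{\sigma-a}=h\bigl(R_b\cdot R_{\sigma-(a+b)}\bigr)=(hR_b)\cdot R_{\sigma-(a+b)}=0$, whence $h=0$ by local duality; thus $\phi$ is injective as soon as $\mu$ is surjective. Moreover $\mu$ is symmetric in its two arguments, so the surjectivity of $R_b\times R_{\sigma-(a+b)}\to R_{\sigma-a}$ and of $R_{\sigma-(a+b)}\times R_b\to R_{\sigma-a}$ are one and the same statement. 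Reading $c=\sigma-(a+b)$ as the first factor and $b$ as the second, the hypotheses of case (2), namely $m\mid c$ together with $b\ge G+1$ (and $b\ge0$, since $b$ is a degree), are exactly those of case (1) after the swap. Hence it suffices to treat case (1).

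So assume $m\mid b$ and $c:=\sigma-(a+b)\ge\max(G+1,0)$, and aim to show $R_b\cdot R_c=R_{b+c}$. As each graded piece of $R$ is spanned by monomials, it is enough to exhibit every monomial $x^\alpha$ of weighted degree $b+c$ as a product of a degree-$b$ element and a degree-$c$ element of $R$. Here the assumption $m\mid b$ is what makes pure powers available: since $a_i\mid m\mid b$ for every $i$, each $x_i^{\,b/a_i}$ lies in $R_b$. The core of the argument is the purely numerical claim that, for a monomial of total degree $b+c$ with $c\ge G+1$, one can always split off a sub-monomial of weighted degree exactly $b$, possibly after reducing $x^\alpha$ modulo the relations $f_0,\dots,f_{n+1}$. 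The attainable sub-degrees $\{\sum_i\beta_i a_i : 0\le\beta_i\le\alpha_i\}$ form a set of integers governed by the semigroup generated by the weights on the support of $\alpha$, and the quantities $m(q|J)$ entering $G$ measure precisely the weighted degrees at which the pure powers on a subset $J$ of variables all become attainable; the averaged bound is engineered so that, once the complementary degree $c$ exceeds $G$, the only monomials resisting a clean factorization already lie in $J(f)$. Specialising to $\PP^{n+1}$, where $s=n+2$, $m=1$ and $G=-1$, the condition reads $a+b\le\sigma$ and the statement collapses to the classical Macaulay theorem.

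The formal reduction of the first two paragraphs is routine; the entire content lies in the final step, the factorization threshold. The main obstacle is to show that $c\ge G+1$ is both necessary and sufficient: one must verify that a monomial of complementary degree $<G+1$ can genuinely obstruct surjectivity — this is where the naive weighted Macaulay statement fails, as in the example $\C[x,y]/(x^2,y^3)$ — while at or above the threshold every monomial either factors through a pure power of degree $b$ or becomes reducible modulo the regular sequence. Carrying this out requires precisely the inclusion–exclusion over subsets $J$ of variables reflected in the definition of $G$, and it is this casework, rather than the duality reduction, that demands the real work.
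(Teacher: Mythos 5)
This statement is quoted verbatim from Tu (Theorem 2.8 of \cite{tu}); the paper gives no proof of its own, so the only meaningful comparison is with Tu's original argument. Your reduction steps are sound and do follow that argument: local duality (Theorem \ref{localduality}) converts injectivity of $R_a\to\Hom(R_b,R_{a+b})$ into surjectivity of the multiplication $R_b\times R_{\sigma-(a+b)}\to R_{\sigma-a}$, and the symmetry in $b$ and $c=\sigma-(a+b)$ collapses case (2) into case (1). One remark on the route: Tu actually proves surjectivity already at the level of the ambient polynomial ring $S$, which suffices via the surjection $S_{\sigma-a}\twoheadrightarrow R_{\sigma-a}$ (this is his Proposition 2.3, also quoted in this paper); working in $R$ and ``reducing $x^\alpha$ modulo the relations,'' as you propose, is both harder and unnecessary, since the $f_i$ are arbitrary weighted-homogeneous forms and reduction modulo them does not preserve monomiality.

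The genuine gap is exactly where you concede it: the claim that $m\mid b$ and $c\ge\max(G+1,0)$ force $S_b\cdot S_c=S_{b+c}$ \emph{is} the theorem, and you give no argument for it. Saying that ``the averaged bound is engineered so that'' the factorization succeeds restates the conclusion; nothing in your text explains why the specific average $G=-s+\frac{1}{n+1}\sum_{k}\binom{n}{k-2}^{-1}\sum_{|J|=k}m(q|J)$, rather than the cruder $-s+m(n+1)$, is a valid threshold, nor how the subsets $J$ and the lcm's $m(q|J)$ actually enter the factorization of a given monomial. That combinatorial lemma (an analysis of which weighted degrees are attainable by sub-monomials, i.e., the gap structure of the semigroups generated by the weights on each support set) is the entire content of Tu's proof and is absent here. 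Two smaller points: necessity of the bound is not asserted in the statement, so your claim that one ``must verify'' that $c<G+1$ can genuinely obstruct surjectivity is not something the proof requires; and the counterexample $\C[x,y]/(x^2,y^3)$ you invoke concerns the failure of Macaulay in the quotient ring, not the failure of $S_b\cdot S_c=S_{b+c}$ in the polynomial ring, so it does not illustrate the sharpness of $G$.
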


It holds:
\begin{thm}[Theorem 2.10 in \cite{tu}]\label{1} let $p$ an integer between 1 and n for which gcd(m,p) divides s. Then there are infinitely many non-negative integers $k \geq ((n+1)p/(n+1-p))-(s/m)$ for which $d=(s+km)/p$ is a positive integer. The infinitesimal Torelli theorem holds for quasi-smooth hypersurfaces of degree $d$ in $\PP(a_0,\ldots, a_{n+1}).$
\end{thm}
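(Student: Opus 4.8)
The plan is to reduce infinitesimal Torelli to a single injectivity statement on the Jacobian ring and then invoke Theorem 2.8 of \cite{tu} (stated just above). Let $R=R_f$ be the Jacobian ring of a quasi-smooth $X_d=V(f)\subset\PP(a_0,\ldots,a_{n+1})$, with top degree $\sigma=(n+2)d-2s$. By the Griffiths residue theorem $H^{p,n-p}_{\prim}(X)\cong R_{(n-p+1)d-s}$, and the differential of the period map in degree $n$ is the direct sum, over $1\le q\le n$, of the cup-product maps
$$R_d\longrightarrow\Hom\bigl(R_{(n-q+1)d-s},\,R_{(n-q+2)d-s}\bigr),\qquad \eta\longmapsto(\xi\mapsto\eta\cdot\xi).$$
Infinitesimal Torelli holds exactly when this combined map is injective, and for that it suffices to produce a \emph{single} index $q$ whose factor is injective: any $\eta$ in the kernel of the whole map lies in the kernel of every factor.

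I would first dispose of the arithmetic claim. Writing $d=(s+km)/p$ is the same as solving the linear Diophantine equation $pd-km=s$ in integers $d,k$, which is solvable precisely when $\gcd(m,p)$ divides $s$, the standing hypothesis. Its solutions in $k$ form a full residue class modulo $p/\gcd(m,p)$, so there are infinitely many non-negative $k$, in particular infinitely many satisfying $k\ge (n+1)p/(n+1-p)-s/m$ and yielding a positive integer $d=(s+km)/p$.

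For the Torelli part I would take the Hodge index $q=p$ and set $b:=(n-p+1)d-s$, the degree of the source group $H^{p,n-p}_{\prim}$, applying case (2) of Theorem 2.8 of \cite{tu} to the map $R_d\to\Hom(R_b,R_{b+d})$. The entire point of the choice $d=(s+km)/p$ is that
$$\sigma-(b+d)=\bigl((n+2)d-2s\bigr)-\bigl((n+2-p)d-s\bigr)=pd-s=km,$$
which is a multiple of $m$, giving the first hypothesis of case (2) for free. For the second hypothesis $b\ge G+1$ I would use the lower bound on $k$: since $km+s=pd$, the inequality $k\ge (n+1)p/(n+1-p)-s/m$ is equivalent to $(n+1-p)d\ge m(n+1)$, and as $b+s=(n+1-p)d$ this reads $b\ge m(n+1)-s$. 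Combined with the recorded estimate $G\le m(n+1)-s$ this yields $b\ge G+1$, so case (2) gives injectivity of $R_d\to\Hom(R_b,R_{b+d})$, hence of the period differential.

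The step I expect to be delicate is exactly the verification $b\ge G+1$, because the definition of $G$ is a combinatorial average of least common multiples and the crude estimate $G\le m(n+1)-s$ leaves a potential off-by-one in the boundary case $G=m(n+1)-s$ (which forces every $m(q|J)$ to equal $m$, as in ordinary $\PP^{n+1}$). Since equality in the estimate for $G$ holds only in that degenerate situation, and since one is free to take $k$ strictly above the stated bound — still leaving infinitely many admissible $k$ and upgrading $b\ge m(n+1)-s\ge G$ to the strict $b>m(n+1)-s\ge G$, hence $b\ge G+1$ — the argument goes through; one must only confirm that the chosen index $p$, rather than its local-duality dual, is the one in the range where case (2) applies.
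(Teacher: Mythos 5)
The paper does not actually prove this statement: it is imported verbatim as Theorem 2.10 of \cite{tu} and used as a black box, so there is no internal proof to compare yours against. Your reconstruction follows what is in fact Tu's route, and the bookkeeping checks out: the Diophantine analysis of $pd-km=s$ is correct (solutions $k$ form a residue class mod $p/\gcd(m,p)$, hence infinitely many admissible $k$); the choice $b=(n-p+1)d-s$ gives $\sigma-(b+d)=pd-s=km$, a multiple of $m$, so case (2) of Theorem 2.8 is the right tool; and the lower bound on $k$ translates exactly into $b=(n+1-p)d-s\ge m(n+1)-s$. Injectivity of a single factor of $\dd\wp$ does suffice for injectivity of the whole differential, as you say.

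The one substantive issue is the one you flag yourself, and your patch does not quite close it. Taking $k$ strictly above the stated bound proves a weaker theorem than the one asserted, which allows equality. The extremal case $G=m(n+1)-s$ can in fact be excluded cleanly: equality forces $m(q|J)=m$ for every two-element $J$, i.e.\ every pair of weights has least common multiple equal to $m$; for any prime $\ell$ dividing $m$ this forces at least $n+1$ of the $n+2$ weights to be divisible by $\ell$, contradicting well-formedness unless $m=1$, and for $m=1$ one has $G=-1$, so the requirement $b\ge\max(G+1,0)=0$ is automatic. However, since $G$ is an average of integers and need not itself be an integer, excluding the extremal case only yields $b>G$, not the required $b\ge G+1$: one still has to bound the deficit $m(n+1)-s-G$ below by $1$ (or verify $b\ge G+1$ against the exact value of $G$ rather than the crude estimate $G\le m(n+1)-s$), and that is precisely where Tu's finer analysis of the numbers $m(q|J)$ is needed. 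So: right strategy, correct reductions, and an honest flag on the delicate boundary step --- but that step is not yet fully closed as written.
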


For other specific choices of the weights it is known:
\begin{thm}[Theorem 4.1 in \cite{donagitu}] \label{2} Let $\PP$ a weighted projective space $\PP(a_0, \ldots, a_{n+1})$ for which $a_0=a_1=1$ and $m$ divides $s$, and let $\mathcal{M}$ the space of quasi-smooth weighted hypersurfaces of degree $d$ in $\PP$ modulo automorphisms of $\PP$. Assume $d$ is a multiple of $m$ and $d \geq $max$(3s, s+m(n+1))$. Then there is an open dense subset of $\mathcal{M}$ on which the period map is defined and injective.
\end{thm}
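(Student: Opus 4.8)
The plan is to reproduce Donagi's generic Torelli strategy, adapted to the weighted setting as in \cite{donagitu}, with the divisibility and degree hypotheses tuned precisely so that the needed nondegeneracy of Jacobian-ring multiplication is available from \cite{tu}. By the Griffiths residue description recalled above, the infinitesimal variation of Hodge structure of a quasi-smooth $X_d = V(f)$ is encoded by the graded pieces $R_{(n-p+1)d - s}$ of the Jacobian ring $R = R_f$ together with the cup-product maps, which are the multiplications by the Kodaira--Spencer space $R_d$, namely $R_d \otimes R_{(n-p+1)d-s} \to R_{(n-p+2)d-s}$. The goal is to reconstruct the graded ring $R$, and ultimately $f$, from this linear-algebraic datum up to the action of $\mathrm{Aut}(\PP)$, which is exactly a global injectivity statement for $\wp$.

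First I would dispose of the infinitesimal (local) statement. Taking the top piece of the period differential, the map $R_d \to \Hom(R_{d-s}, R_{2d-s})$, I would apply criterion (2) of Tu's Theorem 2.8: since $m \mid d$ and $m \mid s$ the codegree $\sigma - (a+b)$ is a multiple of $m$, and since $d \geq s + m(n+1)$ together with the estimate $G \leq -s + m(n+1)$ the degree $b = d - s$ satisfies $b \geq G+1$. This gives injectivity of $\dd\wp$ on an open locus, so the period map is an immersion there; the real content is upgrading this to global injectivity.

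The heart of the argument is the symmetrizer lemma of \cite{donagi}. From the cup-product maps between consecutive Hodge pieces one forms the symmetrizer --- the space of families of linear maps compatible with multiplication on both sides --- and shows that, in the degree window controlled by the hypotheses, it intrinsically reproduces the multiplication by low-degree elements, in particular by the weight-one variables $x_0, x_1$, and hence the graded $R$-module structure up to the degrees $d - a_i$ where the partial derivatives live. The bounds $d \geq 3s$ and $d \geq s + m(n+1)$ are what guarantee the Macaulay-type nondegeneracy of the maps $R_a \otimes R_b \to R_{a+b}$ throughout this window, so that the symmetrizer has the expected rank; the hypothesis $a_0 = a_1 = 1$ provides the two-dimensional space of linear forms needed for the reconstruction to bootstrap in low degree.

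From the reconstructed ring I would identify the Jacobian ideal $J(f) \subset \C[x_0, \ldots, x_{n+1}]$ as the ideal generated by the recovered degree-$(d-a_i)$ pieces, read off the partials $\partial_i f$, and reassemble $f$ by the Euler relation $\sum a_i x_i \partial_i f = d\, f$. The main obstacle is controlling the ambiguity of this reconstruction: in the weighted category $\mathrm{Aut}(\PP)$ is larger and includes nonlinear, grading-respecting automorphisms, so one must prove that two quasi-smooth hypersurfaces with isomorphic IVHS have proportional Jacobian ideals and therefore differ only by such an automorphism. This is precisely why the statement is confined to an open dense subset of $\mathcal{M}$: one removes the locus where the symmetrizer drops rank or where $X_d$ carries extra automorphisms, and verifying that the complement of this bad locus is nonempty and dense is the delicate, genericity-dependent step of the proof.
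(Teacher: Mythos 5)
The paper does not prove this statement: it is quoted verbatim as Theorem 4.1 of Donagi--Tu \cite{donagitu} and used as a black box, so there is no internal proof to compare yours against. Measured instead against the actual argument in \cite{donagitu}, your outline identifies the right ingredients --- the Griffiths--Steenbrink residue description, Tu's Macaulay-type nondegeneracy in the degree window carved out by the hypotheses $m\mid d$, $m\mid s$, $d\ge\max(3s,\,s+m(n+1))$, the symmetrizer lemma exploiting the two weight-one variables $x_0,x_1$, and the recovery of $f$ from the Jacobian ideal via the Euler relation. Two points are under-specified. First, the passage from ``the IVHS determines $f$ up to automorphism'' to ``the period map is injective on a dense open subset'' is not an ad hoc removal of a bad locus as your last paragraph suggests: it is the formal implication \emph{variational Torelli $\Rightarrow$ generic Torelli} of Cox--Donagi--Tu \cite{donagitubis}, which is what actually manufactures the open dense subset of $\mathcal{M}$; without invoking it you would still need to argue that the period map (not just its differential) separates points generically. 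Second, the symmetrizer lemma in the weighted setting is the technical heart of \cite{donagitu} and is only named in your sketch; its validity in the required degrees is exactly where the hypotheses $a_0=a_1=1$ and the divisibility conditions enter, and asserting that it ``has the expected rank'' is the content to be proved, not a consequence of the Macaulay bounds alone. As a roadmap your proposal is faithful to the source; as a proof it defers the two decisive steps to the literature.
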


Many interesting cases are still left open. In particular the answer was not known for $\Q$-Fano threefolds hypersurfaces, despite their importance in terms of birational geometry. In this paper we give an answer to this problem, following a careful analysis of the Jacobian rings involved.

\section{Fano hypersurfaces threefolds and Torelli problem}

We first study the infinitesimal Torelli for the 'famous 95'. These are Fano threefolds of index 1 first discovered by Iano-Fletcher and Reid. They are listed online in the graded ring database \cite{grdb}, together with several thousands of other Fanos in higher codimension. We do not include here the complete list of 95 hypersurfaces: the interested reader can easily consult the above database. 
\subsection{The infinitesimal Torelli for Fano varieties of index 1}
Recall from the introduction the Griffiths description of the differential of the period map for an Hodge structure of weight $k$ $$\dd \wp: H^1(T_X) \rightarrow \bigoplus_{p=0}^k \Hom(H^{p,q}(X), H^{p-1,q+1}(X)).$$
In particular, its injectivity is enough for the infinitesimal Torelli theorem to hold. Assume that $X_d$ is a smooth hypersurface of degree $d$ in $w\PP^n(a_0, \ldots, a_n)$ of dimension at least 3, and that $H^2(\of_X)=0$ (equivalently, all the deformations of $X$ are projective). Denote by $s= \sum a_i$. The Griffiths-Steenbrink description of the primitive cohomology of  a quasi-smooth hypersurface reduces the problem to the injectivity of the polynomial map $$ R_d \longrightarrow \bigoplus_{p=0}^n \Hom(R_{(p+1)d-s}, R_{(p+2)d-s}),$$ where $R$ denotes as usual the Jacobian ring and the subscript refers to its homogeneous components.\\
For a quasi-smooth Fano threefolds we have $H^{3,0}(X)=H^{0,3}(X)=0$: moreover if we focus on the index 1 condition what we have to verify in order to have the infinitesimal Torelli for any of the previous 95 families is to check the injectivity of the natural map $$ R_d \to \Hom (R_{d-1}, R_{2d-1}).$$
If we use local duality theorem \ref{localduality}, since the socle is located in degree $\sigma=3d-2\iota_X$ we can rephrase  the problem as follows
\begin{rmk} Let $X$ as above. The infinitesimal Torelli theorem holds for $X$ if the natural map $$ \Sym^2(R_{d-1}) \to R_{2d-2}$$ is surjective. 
\end{rmk}
%
%
%
%
We first try to apply the condition of weighted Macaulay's theorem recalled in the introduction.  Consider for example the case (5) of the list above, that is $X_{7}\subset \mathbb{P}(1,1,1,2,3)$. Using the notations of Theorem \ref{1}, the equations that need to be satisfied is $d=7=(8+6k)/p$, together with the condition $k\geq (4p/4-p)-4/3$, but this is clearly not possible. By similar standard computations we get
\begin{lemma} Only the families no.1 and no.2 of the Fletcher-Reid list (that is, $X_4 \subset \PP^4$ and $X_5 \subset \PP(1^4,2)$ ) satisfies the numerical conditions of the weighted Macaulay's theorem.
\end{lemma}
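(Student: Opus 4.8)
The plan is to verify, family by family, the two numerical conditions of Theorem \ref{1} (Tu's Theorem 2.10) against the actual weight data of the 95 Fletcher--Reid families, and to show that exactly families no.\ 1 and no.\ 2 survive. Recall that for an index-$1$ Fano threefold the relevant injectivity is that of $R_d \to \Hom(R_{d-1},R_{2d-1})$, so in the notation of Theorem \ref{1} we take $n=3$, $p=1$ (the map lands between a single pair of graded pieces corresponding to $H^{2,1}$), and $s=\sum_{i=0}^{4} a_i = d+1$ since the index is $1$. The first thing I would do is write down the two arithmetic requirements of Theorem \ref{1} in this specialized form: that $\gcd(m,p)=\gcd(m,1)=1$ divides $s$ (always true, so this imposes nothing), that $d=(s+km)/p=(s+km)$ be a positive integer realized for some admissible $k$, and that $k \geq ((n+1)p/(n+1-p))-(s/m) = 4/3 - s/m$.

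The core of the argument is then a finite check. For each of the 95 families the weights $(1,a_1,a_2,a_3,a_4)$ and the degree $d=s-1$ are tabulated in \cite{grdb}, so $m=\mathrm{lcm}(a_0,\dots,a_4)$ and $s$ are determined. The decisive equation is $d = s + km$, i.e.\ $km = d - s = -1$ after using $s=d+1$. Since $m\geq 1$ and $k$ must be a non-negative (indeed integer) solution, $km=-1$ has no admissible solution unless the numerology degenerates; the only way Theorem \ref{1} can apply is through the exceptional low-weight families where $m=1$. When $m=1$, every weight equals $1$, forcing $w\PP=\PP^4$ and the ordinary smooth case $X_4\subset\PP^4$ (family no.\ 1), for which Tu's conditions collapse to the classical Macaulay statement. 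The remaining survivor, $X_5\subset\PP(1^4,2)$ (family no.\ 2), I would handle by the same substitution, checking that $m=2$ together with $d=5$, $s=6$ makes the congruence $d=(s+km)/p$ solvable with $k$ meeting the bound $k\geq 4/3 - s/m = 4/3 - 3 = -5/3$; here the worked illustration already given in the text for family no.\ 5 (where $d=7=(8+6k)/p$ forces an impossible $k$) shows the prototype computation, and I would simply reproduce it across the list.

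The only genuinely mechanical labor is running this substitution through all 95 entries, but the pattern makes it transparent: whenever at least one weight exceeds the others so that $m>1$ and simultaneously $s=d+1$, the equation $d=(s+km)/p$ together with the lower bound on $k$ becomes incompatible, exactly as in the displayed $X_7\subset\PP(1,1,1,2,3)$ calculation. I expect the main obstacle to be purely bookkeeping rather than conceptual: one must be careful that $p$ is allowed to range over $1\leq p\leq n=3$ (not fixed at $1$), since Theorem \ref{1} quantifies over $p$, so for completeness I would confirm that no alternative choice of $p\in\{1,2,3\}$ rescues any of the excluded families. Checking the $p=2,3$ cases requires verifying that the corresponding degree $d=(s+km)/p$ either fails to be an integer or violates $k\geq ((n+1)p/(n+1-p))-(s/m)$, and this is where a tabulated sweep is indispensable. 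Once the table confirms that only families no.\ 1 and no.\ 2 admit any valid $(p,k)$, the lemma follows.
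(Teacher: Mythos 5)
There is a genuine gap: your argument is anchored at the wrong value of $p$, and as a consequence it never actually verifies that families no.\ 1 and no.\ 2 \emph{do} satisfy Tu's conditions --- indeed, taken at face value it would exclude them too. Setting $p=1$ in $d=(s+km)/p$ with $s=d+1$ gives $km=d-s=-1$, which has no solution with $k$ a non-negative integer and $m\geq 1$; this is true for \emph{every} index-$1$ family, including no.\ 1 and no.\ 2. Your claim that ``the only way Theorem \ref{1} can apply is through the exceptional low-weight families where $m=1$'' is therefore false: $m=1$ does not rescue $km=-1$, and $X_4\subset\PP^4$ is not certified by the $p=1$ branch. Likewise your treatment of $X_5\subset\PP(1^4,2)$ via the bound $k\geq 4/3-s/m$ is again the $p=1$ computation, where $2k=d-s=-1$ is not even an integer. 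The identification ``$p=1$ because the map lands in the piece corresponding to $H^{2,1}$'' is the source of the error: in Tu's parametrization the graded piece $H^{2,1}\cong R_{2d-s}$ sits in degree $pd-s$ with $p=2$, so the relevant branch is $p=2$, where $pd-s=d-1=km$ forces $k=(d-1)/m$ and the bound becomes $k\geq \frac{4\cdot 2}{4-2}-\frac{s}{m}=4-\frac{d+1}{m}$.

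Running the correct branch: for no.\ 1 ($d=4$, $m=1$, $s=5$) one gets $k=3\geq -1$; for no.\ 2 ($d=5$, $m=2$, $s=6$) one gets $k=2\geq 1$; for no.\ 5 ($d=7$, $m=6$, $s=8$) one gets $k=1$ but the bound is $8/3$, so it fails --- which is exactly the computation displayed in the paper before the lemma, and which the paper then says to repeat across the list. So the intended proof is the same finite arithmetic sweep you describe, but with $p=2$ (or, if one insists on quantifying over all $p\in\{1,2,3\}$ as the statement of Theorem \ref{1} permits, with $p=2$ as the branch that actually produces the two survivors). Your proposal's mechanical sweep is the right shape of argument, but as written its decisive equation $km=-1$ proves the wrong statement (that \emph{no} family qualifies), and the positive verifications for no.\ 1 and no.\ 2 are missing.
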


In the other cases, a partial answer can be obtained by looking at the surjectivity of the multiplication map in the ambient ring. More precisely
\begin{proposition}[Proposition 2.3 in \cite{tu}] Let $R=\C[x_0,\ldots, x_n]/J$ be a weighted ring for which local duality holds, and let $\sigma$ be the top degree of R. Given non-negative integers $a,b$ satisfying $a+b \leq \sigma$ and denoting with $S=\C[x_0,\ldots, x_n]$ if $$ S_b \times S_{\sigma-(a+b)} \to S_{\sigma-a}$$ is surjective, then $R_a\to \Hom(R_b, R_{a+b})$ is injective.
\end{proposition}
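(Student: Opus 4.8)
The plan is to prove injectivity by dualizing the vanishing condition through local duality. The map in question sends $\alpha \in R_a$ to the homomorphism $\beta \mapsto \alpha\beta$, so its kernel consists of those $\alpha$ with $\alpha \cdot R_b = 0$ inside $R_{a+b}$. The goal is therefore to show that $\alpha \cdot R_b = 0$ forces $\alpha = 0$ in $R_a$.

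First I would invoke the local duality hypothesis: the multiplication pairing $R_a \times R_{\sigma - a} \to R_\sigma \cong \C$ is non-degenerate. Consequently $\alpha = 0$ if and only if $\alpha \cdot R_{\sigma - a} = 0$ in the top-degree piece $R_\sigma$. This reduces the claim to proving the implication $\alpha \cdot R_b = 0 \Rightarrow \alpha \cdot R_{\sigma - a} = 0$, trading the hard-to-control middle degree $b$ for the complementary degree $\sigma - a$, where duality is available.

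The surjectivity hypothesis then provides the bridge between the two degrees. Fix any $\gamma \in R_{\sigma-a}$ and lift it to some $\tilde\gamma \in S_{\sigma-a}$ under the quotient $S \to R$. Since $S_b \times S_{\sigma-(a+b)} \to S_{\sigma-a}$ is surjective, I can write $\tilde\gamma = \sum_i g_i h_i$ with $g_i \in S_b$ and $h_i \in S_{\sigma-(a+b)}$; the degrees match because $b + (\sigma - (a+b)) = \sigma - a$. Projecting to $R$ gives $\gamma = \sum_i \bar g_i \bar h_i$ with $\bar g_i \in R_b$. Hence $\alpha\gamma = \sum_i (\alpha \bar g_i)\bar h_i$, and each factor $\alpha \bar g_i$ lies in $\alpha \cdot R_b = 0$, so $\alpha\gamma = 0$. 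As $\gamma$ was arbitrary, $\alpha \cdot R_{\sigma-a} = 0$, and by non-degeneracy $\alpha = 0$.

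I do not expect a genuine obstacle here: the argument is purely formal once the role of each hypothesis is identified. The only points demanding care are the degree bookkeeping — checking that $a + b \leq \sigma$ guarantees $\sigma - (a+b) \geq 0$, so that $S_{\sigma-(a+b)}$ is a legitimate non-negatively graded summand — and the verification that the factorization produced in the polynomial ring $S$ descends compatibly to $R$, so that the relation $\alpha \cdot R_b = 0$ can be applied term by term.
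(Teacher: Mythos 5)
Your argument is correct and complete: reduce injectivity to the statement that $\alpha \cdot R_b = 0$ forces $\alpha \cdot R_{\sigma-a}=0$, then use the surjectivity of $S_b \times S_{\sigma-(a+b)} \to S_{\sigma-a}$ to factor an arbitrary lift of $\gamma \in R_{\sigma-a}$ through degree $b$, and conclude by the non-degeneracy of the socle pairing. The paper itself gives no proof, citing Proposition 2.3 of Tu directly, and your reasoning is exactly the standard argument behind that cited result, with the degree bookkeeping ($\sigma-(a+b)\geq 0$) correctly checked.
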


In our case, we have $a=d, b=d-1$ and $\sigma=\sum d-2a_i=5d-2s$, with $s=d+1$. Therefore we have to check the result for $S_{d-1} \times S_{d-1} \to S_{2d-2}$, or equivalently the surjectivity of the natural map $$\Sym^2(S_{d-1}) \to S_{2d-2}.$$ 
\begin{lemma}Of the 93 remaining families, only the number 5, that is $X_7 \subset \PP(1,1,1,2,3)$ satisfies the surjectivity already at the level of polynomial ring.
\end{lemma}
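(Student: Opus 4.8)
The plan is to translate the surjectivity of $\Sym^2(S_{d-1}) \to S_{2d-2}$ into a purely combinatorial condition on the weights. Since the image of this multiplication map is spanned by products $m\cdot m'$ of two monomials of weighted degree $d-1$, and products of monomials are again monomials, a basis monomial $M$ of weighted degree $2d-2=2(d-1)$ lies in the image if and only if $M$ admits a divisor of weighted degree exactly $d-1$. Equivalently, recording the exponents of $M$ as a multiset $W$ of weights (each variable contributing its weight with multiplicity equal to its exponent, so that $\sum W = 2(d-1)$), the condition is that $W$ can be split into two sub-multisets each of total weight $d-1$. Thus surjectivity at the level of $S$ is equivalent to the statement that \emph{every} weighted-degree-$2(d-1)$ monomial admits such a balanced bipartition, while a single monomial failing this already exhibits non-surjectivity.

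First I would record the simplest obstruction. If a variable $x_i$ has weight $a_i$ with $a_i \mid 2(d-1)$ but $a_i \nmid (d-1)$, then the pure power $M = x_i^{2(d-1)/a_i}$ has weighted degree $2(d-1)$, yet its only divisors are the powers $x_i^{j}$, of weighted degree $j a_i$, none equal to $d-1$; hence $M$ is not in the image. Concretely this occurs exactly when $a_i$ is even, $\tfrac{a_i}{2}\mid (d-1)$, and $a_i \nmid (d-1)$. Running through the Fletcher--Reid list, for the large majority of the $93$ families one of the weights supplies such a witness directly, and I would tabulate the offending variable and monomial family by family.

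For the handful of remaining families, where every weight dividing $2(d-1)$ also divides $d-1$ so that no pure-power witness exists, I would instead construct a mixed monomial in two or three variables whose weight-multiset admits no subset summing to $d-1$; such monomials are located by an elementary finite search, and again I would list them. The one family for which no obstruction exists must then be shown genuinely surjective: for $X_7\subset\PP(1,1,1,2,3)$ one has $d-1=6$ and $2(d-1)=12$. Writing a degree-$12$ monomial as $p$ factors of weight $1$ (freely distributed among $x_0,x_1,x_2$), $q$ factors of weight $2$, and $r$ factors of weight $3$, so that $p+2q+3r=12$, it suffices to produce integers $0\le a\le p$, $0\le b\le q$, $0\le c\le r$ with $a+2b+3c=6$; the abundance of weight-$1$ variables reduces this to a short case analysis over the admissible triples $(p,q,r)$, which always has a solution.

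The main obstacle is the positive direction for family $5$, since there one must rule out \emph{every} possible obstruction rather than merely exhibit one. The negative directions, by contrast, are each settled by a single explicit witness monomial, so the genuine work splits into (a) verifying that the three-weight bipartition problem for $\PP(1,1,1,2,3)$ always admits a balanced split, and (b) confirming, across the other $92$ families, that a witness always exists — the latter being a bookkeeping task over the known list rather than a conceptual difficulty.
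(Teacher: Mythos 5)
Your proposal is correct and follows essentially the same route as the paper: non-surjectivity for a family is witnessed by a single monomial of weighted degree $2(d-1)$ admitting no divisor of degree $d-1$ (the paper's example being $x_4^3x_0$ for $X_6\subset\PP(1^4,3)$), and surjectivity for no.~5 is checked directly. You merely make the divisor/bipartition criterion and the pure-power witness recipe explicit and propose to do the $92$-family tabulation by hand, where the paper works one case and delegates the rest to a Macaulay2 search.
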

\begin{proof}
On $X_7$ the multiplication map  $\Sym^2(S_{6}) \to S_{12}$ can be verified to be surjective. All the other cases yields counterexamples. For example consider $X_6\subset\PP(1^4,3)$. $\Sym^2(S_{5}) \to S_{10}$ is not surjective as we can see by the element $x_4^3x_0$, where $x_4$ is the variable of weights 3. Indeed is clear that $S_5=\langle \Sym^5(x_0, \ldots, x_3), \Sym^2(x_0, \ldots, x_3) \cdot x_4 \rangle$. An extensive computer search using Macaulay2 confirms this statement.
\end{proof}

\begin{corollary} Let $X$ any quasi smooth member of the family $X_7 \subset \PP(1,1,1,2,3)$. Then the differential of the Period map $$\dd \wp: H^1(X, T_X) \to \Hom(H^{2,1}(X), H^{1,2}(X))$$ is injective, and therefore the local Torelli theorem holds.
\end{corollary}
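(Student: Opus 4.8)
The plan is to reduce the injectivity of the period differential to a single algebraic multiplication map and then feed that map into Proposition~2.3 of \cite{tu} together with the preceding Lemma. First I would record that $X_7\subset\PP(1,1,1,2,3)$ is quasi-smooth Fano of index $1$, with $s=\sum a_i=8$ and $d=7$, so $s-d=1$. Since $X$ is a Fano threefold one has $H^{3,0}(X)=H^{0,3}(X)=0$ and $H^3(X,\C)$ is entirely primitive; hence the only possibly nonzero component of $\dd\wp$ is the one landing in $\Hom(H^{2,1}(X),H^{1,2}(X))$, and proving \emph{its} injectivity already suffices for local Torelli.

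Next I would translate everything into the Jacobian ring $R=R_f$ via the Griffiths–Steenbrink description. With $n=3$ and $s=8$ one computes
\[
H^{2,1}_{\prim}(X)\cong R_{2d-s}=R_6=R_{d-1},\qquad H^{1,2}_{\prim}(X)\cong R_{3d-s}=R_{13}=R_{2d-1},
\]
while $H^1(X,T_X)\cong R_d=R_7$. Thus the differential becomes the multiplication-induced map
\[
R_7\longrightarrow \Hom(R_6,R_{13}),
\]
and the task is exactly to show this is injective. I would also record the top degree $\sigma=\sum(d_i-a_i)=5d-2s=19$, which is what makes the duality step work.

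The key step is Proposition~2.3 in \cite{tu}. Because $X$ is quasi-smooth, the partial derivatives of $f$ form a regular sequence, so $R$ is a finite-dimensional complete-intersection ring for which local duality (Theorem~\ref{localduality}) holds, i.e.\ the pairing $R_a\times R_{\sigma-a}\to R_\sigma$ is nondegenerate. Applying the Proposition with $a=d=7$ and $b=d-1=6$, the hypothesis to verify is the surjectivity of
\[
S_b\times S_{\sigma-(a+b)}\longrightarrow S_{\sigma-a},\qquad\text{i.e.}\qquad S_6\times S_6\longrightarrow S_{12},
\]
since $\sigma-(a+b)=19-13=6$ and $\sigma-a=19-7=12$. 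This is precisely the surjectivity of $\Sym^2(S_6)\to S_{12}$ established in the preceding Lemma. Therefore $R_7\to\Hom(R_6,R_{13})$ is injective, $\dd\wp$ is injective, and local Torelli holds.

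I do not expect a genuine obstacle: the substantive computation, surjectivity of $\Sym^2(S_6)\to S_{12}$ at the polynomial level, has already been carried out, and everything else is degree bookkeeping. The only points requiring care are checking that the three relevant slices really are $R_7$, $R_6$, $R_{13}$ with $\sigma=19$ (so that the duality-shifted degrees $6$ and $12$ line up with the Lemma's map), and verifying that local duality genuinely applies to $R$, which follows from quasi-smoothness. The one conceptual point worth emphasizing is \emph{why} surjectivity of a map on the polynomial ring $S$ forces injectivity of the dual map on the quotient ring $R$; this is exactly the mechanism packaged in Proposition~2.3 through the nondegenerate duality pairing, and it is the place where the hypotheses of local duality are used.
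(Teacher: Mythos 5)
Your proposal is correct and follows exactly the paper's route: reduce $\dd\wp$ to $R_7\to\Hom(R_6,R_{13})$ via Griffiths--Steenbrink, compute $\sigma=5d-2s=19$, and invoke Proposition~2.3 of Tu, whose hypothesis is the surjectivity of $\Sym^2(S_6)\to S_{12}$ established in the preceding Lemma. The degree bookkeeping ($a=7$, $b=6$, $\sigma-(a+b)=6$, $\sigma-a=12$) all checks out, so nothing is missing.
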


To prove the theorem for the other cases, we have to use the following lemma
\begin{lemma} \label{genericity}Let $\pi: \mathcal{X} \to U$ a flat family of quasi-smooth hypersurfaces such that for the central fiber $X_0=\pi^{-1}(0)$ the infinitesimal Torelli property holds. Then the same holds for the fibers $\pi^{-1}(U)$ over an open $0 \in U$.
\end{lemma}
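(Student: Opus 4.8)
The plan is to recognize the infinitesimal Torelli property as the maximality of a rank, i.e. the injectivity of a linear map whose entries vary holomorphically with the parameter of the family, and then to invoke lower semicontinuity of the rank. Concretely, for a quasi-smooth fiber $X_t=\pi^{-1}(t)=V(f_t)$ the Griffiths--Steenbrink description reduces infinitesimal Torelli to the injectivity of the polynomial map
$$
R_{f_t,\,d} \longrightarrow \bigoplus_{p=0}^{n} \Hom\bigl(R_{f_t,\,(p+1)d-s},\; R_{f_t,\,(p+2)d-s}\bigr),
$$
induced by multiplication in the Jacobian ring $R_{f_t}=\C[x_0,\dots,x_n]/J(f_t)$. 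The hypothesis is precisely that this map is injective for $t=0$.

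The first step is to arrange the sources and targets into vector bundles over a neighborhood of $0$. Quasi-smoothness is an open condition on $t$, so after shrinking $U$ every fiber is quasi-smooth; by definition this means the partial derivatives $\partial f_t/\partial x_i$ form a regular sequence, so $R_{f_t}$ is resolved by the associated Koszul complex of the complete intersection cut out by the partials. Consequently the Hilbert function $t\mapsto \ddim_{\C} R_{f_t,\,a}$ is computed by a fixed formula depending only on the weights $a_i$ and the degree $d$, hence is constant in $t$. Therefore each graded piece $R_{f_t,\,a}$ has constant dimension and assembles into a holomorphic vector bundle over the shrunk base, while multiplication by the universal polynomial defines a morphism of vector bundles whose fiber over $t$ is the displayed map.

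Next I would apply semicontinuity. The fiberwise injectivity of a morphism of vector bundles, equivalently the maximality of its rank, is an open condition on the base: the locus where the rank drops is the common zero set of the relevant maximal minors, which depend polynomially (hence holomorphically) on $t$, and is therefore closed. Since the map is injective at $t=0$, its rank there is maximal, so it remains injective on an open neighborhood $0\in U'\subset U$. Unwinding the Griffiths--Steenbrink dictionary back, the differential of the period map is then injective for every $t\in U'$, which is the infinitesimal Torelli property for those fibers.

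The one point requiring genuine care -- and which I expect to be the only real obstacle -- is the constancy of the Hilbert function, i.e. that the graded pieces truly form vector bundles rather than merely coherent sheaves. This is exactly where quasi-smoothness is used: it guarantees that the partials form a regular sequence for every $t$, so the numerical type of $R_{f_t}$ is locked and no dimension jump occurs. Without this the function $\ddim_{\C} R_{f_t,\,a}$ would only be upper semicontinuous, the source and target could degenerate, and the clean rank argument would break down.
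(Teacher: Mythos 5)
Your proposal is correct and follows essentially the same route as the paper: the paper's proof is the one-line observation that maximality of the rank of the differential of the period map is an open condition on the base. You simply make this explicit via the Griffiths--Steenbrink description, and your added care about the constancy of $\ddim_{\C} R_{f_t,\,a}$ (coming from the partials forming a regular sequence on every quasi-smooth fiber) is exactly the detail the paper leaves implicit in order for the rank-semicontinuity argument to apply to genuine vector bundles.
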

\begin{proof} We recall that $\mathcal{M}_0$ is a complex manifold and the period domain is a variety. Hence the condition of the differential of the period map being of maximal rank is an open condition.
\end{proof}

\begin{thm}\label{index1} Let $X_d \subset \PP(a_0, \ldots, a_4)$ one of the 92 families left (not 1,2,5). Then the Local Torelli theorem holds generically for $X_d$.
\end{thm}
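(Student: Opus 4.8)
The plan is to combine the reduction already established with Lemma \ref{genericity} and then to exploit the relations coming from the Jacobian ideal, which is precisely what the naive polynomial-ring surjectivity ignores. By the Remark above, infinitesimal Torelli for a member $X_d = V(f)$ is equivalent to the surjectivity of the multiplication $\Sym^2(R_{d-1}) \to R_{2d-2}$, where $R = R_f$ is the Jacobian ring. For each of the $92$ families the space of quasi-smooth polynomials of degree $d$ is an open subset of the projective space $\PP(S_d)$, hence irreducible, and surjectivity of $\Sym^2(R_{d-1}) \to R_{2d-2}$ is an open condition by Lemma \ref{genericity}. Thus it suffices to produce, for each family, a single quasi-smooth $f$ for which the surjectivity holds; the resulting nonempty open set is then automatically dense.

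First I would rewrite the target surjectivity at the level of the polynomial ring $S = \C[x_0, \ldots, x_4]$. The map $\Sym^2(R_{d-1}) \to R_{2d-2}$ is surjective if and only if
\[
S_{2d-2} = \operatorname{Im}\bigl(\Sym^2(S_{d-1}) \to S_{2d-2}\bigr) + J(f)_{2d-2},
\]
so the obstruction is measured by the finitely many \emph{bad} monomials of $S_{2d-2}$ that are not products of two elements of $S_{d-1}$. A monomial is bad exactly when some high-weight variable $x_i$ occurs with an exponent too large to be split between two factors of degree $d-1$ (as in the example $x_4^3 x_0 \in S_{10}$ for $X_6 \subset \PP(1^4,3)$, where $x_4$ has weight $3$). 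Enumerating these bad monomials is a purely combinatorial, family-by-family task.

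The heart of the argument is to clear each bad monomial using the Jacobian ideal. Quasi-smoothness forces $f$ to contain, for each high-weight variable $x_i$, a monomial of the form $x_i^{k_i}$ or $x_i^{k_i} x_j$; consequently $\partial f/\partial x_i$ carries a term proportional to $x_i^{k_i-1}$ (or $x_i^{k_i-1} x_j$) with nonzero coefficient for generic $f$. Multiplying a suitable monomial by $\partial f/\partial x_i$ therefore expresses a high power of $x_i$, modulo $J(f)$, as a sum of monomials in which the exponent of $x_i$ has strictly dropped; one then checks that each such lower term already lies in $\operatorname{Im}(\Sym^2 S_{d-1})$. In the $X_6$ case, for instance, $x_4^2 x_0 \cdot \partial f/\partial x_4 \in J(f)$ shows $x_4^3 x_0 \equiv \lambda\, x_4^2 x_0\, c(x_0,\ldots,x_3) \pmod{J(f)}$ for a scalar $\lambda$ and a cubic $c$, and the right-hand side $x_4^2\cdot(x_0 c)$ is visibly a product of two elements of $S_5$. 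Running this reduction, ordered by the exponent of the offending variable, disposes of every bad monomial.

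The main obstacle is ensuring that this reduction terminates and is exhaustive across all $92$ families simultaneously. A single application of a partial derivative may replace one bad monomial by others, so one needs a weighted monomial order (or an induction on the total high-weight content) guaranteeing that the process strictly decreases and ultimately lands in $\operatorname{Im}(\Sym^2 S_{d-1})$; the cases with several high-weight variables, or with weights sharing common factors, are the delicate ones, since there the distinct partials interact. For the structurally uniform families this can be carried out by hand as above, while the remaining cases are settled by an explicit Macaulay2 computation of the cokernel of $\Sym^2(R_{d-1}) \to R_{2d-2}$ for one chosen quasi-smooth $f$ per family. Genericity guarantees that the required leading coefficients of the partials are nonzero, and Lemma \ref{genericity} propagates the conclusion from the chosen fiber to an open dense subset of $\mathcal{M}$.
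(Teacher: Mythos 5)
Your proposal is correct and takes essentially the same route as the paper: reduce to the surjectivity of $\Sym^2(R_{d-1})\to R_{2d-2}$, verify it on a single quasi-smooth member of each of the $92$ families (by hand where the offending monomials can be cleared against the Jacobian relations, and by an explicit Macaulay2 computation otherwise), and then propagate to a dense open subset of the irreducible parameter space via the openness of Lemma \ref{genericity}. The only slip is cosmetic: in your $X_6\subset\PP(1^4,3)$ illustration the expression $x_4^2\cdot(x_0c)$ is a product in $S_6\times S_4$ rather than $S_5\times S_5$, and one must regroup it as a sum of products $(x_4m)(x_4m')$ with $m,m'$ quadrics in the weight-one variables to land in the image of $\Sym^2(S_5)$.
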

\begin{proof}
The idea of the proof is simple. We check the surjectivity of the dual of the infinitesimal period map on a quasi-smooth element $X_0$. A genericity condition can be found for example in \cite{iano}. Once this is confirmed, by the lemma \ref{genericity} the result will hold in a Zariski open set $ 0 \in U$, where we can think of $X_0$ as central fiber of a flat family $\pi: \mathcal{X} \to U$ .\\
The check on the central fiber can be done with the help of the following Macaulay2 code
\begin{verbatim}
def=(d, weights)->(
R=QQ[x,y,z,t,w, Degrees=>weights];
 f=random(d,R); J=Jacobian matrix f;
B=(R/ideal J);L1=flatten entries (symmetricPower(2, basis(d-1,B)));
L2=flatten entries basis(2*d-2,B); 
APP=select(L2, a-> not member(a, L1));
IL=ideal(L1);  print APP, print LINCOMB,
for i in APP do if ((i % IL)!=0) then print i else print 0
)
\end{verbatim}
Actually all the computations could be solved by hand in principle. To have a concrete grasp of how this work we refer to the example \ref{example}, where we deal with the (most) interesting case of index >1. An extensive computer search with Macaulay2 confirms that this holds for any of the 92 remaining families.
\end{proof}

\begin{rmk} {\rm{It is because of Lemma \ref{genericity} that some authors adopt the somewhat confusing expression "local Torelli" instead of "infinitesimal Torelli".}}
\end{rmk}

\subsection{The infinitesimal Torelli for Fano varieties of index >1}
We now investigate the case of higher index Fano threefolds. Unlike the index 1 case, here the situation is much more various and complicated: in particular we will have (other than the Hodge-trivial examples) some families for which the infinitesimal Torelli fails, and some families for which it holds. We recall in \ref{table1} the list of Fano threefold of higher index, from Okada (see \cite{okada}), ordered according to the index.\\
\newline
\noindent\begin{minipage}{\linewidth}
   \centering
\begin{tabular}[]{cccc|cccc}
\hline
No. & $X_d \subset \mbP (a_0,\dots,a_4)$ & Ind&Torelli & No. & $X_d \subset \mbP (a_0,\dots,a_4)$  & Ind&Torelli \\
\hline
96 & $X_3 \subset \mbP (1,1,1,1,1)$ &2& T& 113 & $X_4 \subset \mbP (1,1,2,2,3)$ &  5 &R\\[0.2mm]
97 & $X_4 \subset \mbP (1,1,1,1,2)$ &  2& T& 114 & $X_6 \subset \mbP (1,1,2,3,4)$ & 5 &T\\[0.2mm]
98 & $X_6 \subset \mbP (1,1,1,2,3)$ &  2 &T & 115 & $X_6 \subset \mbP (1,2,2,3,3)$ &  5&AT \\[0.2mm]
99 & $X_{10} \subset \mbP (1,1,2,3,5)$ &2& T& 116 & $X_{10} \subset \mbP (1,2,3,4,5)$ &  5 &T\\[0.2mm]
100 & $X_{18} \subset \mbP (1,2,3,5,9)$ &  2&T & 117 & $X_{15} \subset \mbP (1,3,4,5,7)$ &  5&T \\[0.2mm] 
101 & $X_{22} \subset \mbP (1,2,3,7,11)$ &2& T& 118 & $X_6 \subset \mbP (1,1,2,3,5)$ &  6 &T\\[0.2mm]
102 & $X_{26} \subset \mbP (1,2,5,7,13)$ &  2& T& 119 & $X_6 \subset \mbP (1,2,2,3,5)$ &  7&R \\[0.2mm]
103 & $X_{38} \subset \mbP (2,3,5,11,19)$ & 2& T& 120 & $X_6 \subset \mbP (1,2,3,3,4)$ &  7 &R\\[0.2mm]
104 & $X_2 \subset \mbP (1,1,1,1,1)$ & 3 &R& 121 & $X_8 \subset \mbP (1,2,3,4,5)$ &  7 &AT\\[0.2mm]
105 & $X_3 \subset \mbP (1,1,1,1,2)$ & 3 &T& 122 & $X_{14} \subset \mbP (2,3,4,5,7)$ &  7& AT\\[0.2mm]
106 & $X_4 \subset \mbP (1,1,1,2,2)$ & 3 & T&123 & $X_6 \subset \mbP (1,2,3,3,5)$ & 8 &R\\[0.2mm]
107 & $X_6 \subset \mbP (1,1,2,2,3)$ &  3 &T &124 & $X_{10} \subset \mbP (1,2,3,5,7)$ &  8& T\\[0.2mm]
108 & $X_{12} \subset \mbP (1,2,3,4,5)$ & 3 &T& 125 & $X_{12} \subset \mbP (1,3,4,5,7)$ &  8&T \\[0.2mm]
109 & $X_{15} \subset \mbP (1,2,3,5,7)$ & 3 & T&126 & $X_6 \subset \mbP (1,2,3,4,5)$ &  9 &R\\[0.2mm]
110 & $X_{21} \subset \mbP (1,3,5,7,8)$ & 3 & T&127 & $X_{12} \subset \mbP (2,3,4,5,7)$ &9 &AT\\[0.2mm]
111 & $X_4 \subset \mbP (1,1,1,2,3)$ & 4 &T &128 & $X_{12} \subset \mbP (1,4,5,6,7)$ &  11 &T\\[0.2mm]
112 & $X_6 \subset \mbP (1,1,2,3,3)$ & 4 &T& 129 & $X_{10} \subset \mbP (2,3,4,5,7)$ &  11 &R\\[0.2mm]
& & & &130 & $X_{12} \subset \mbP (3,4,5,6,7)$ & 13& R\\
\end{tabular}
\captionof{table}{List of Fano threefolds of index 1 and their behaviour with respect to Torelli problem.  The notation AT/T/R stands for (respectively) Anti-Torelli, Torelli, Rigid}
  \label{table1} 
\end{minipage}
What do we have to check? Let us call the index of any $X$ $\iota_X$, that is $\omega_X \cong \of_X(-\iota_X)$. Keeping notations as in the introduction we have
\begin{lemma} Let $X_d \subset \PP(a_0, \ldots, a_4)$ a quasi-smooth Fano threefold hypersurface of index $\iota_X$. Then the infinitesimal Torelli theorem holds if the natural map $$R_{d-\iota_X} \times R_{d-\iota_X} \to R_{2d-2\iota_X}$$ is surjective.
\end{lemma}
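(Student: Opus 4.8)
The plan is to reduce the infinitesimal Torelli property to the injectivity of a single multiplication map in the Jacobian ring $R=R_f$, and then to transport that injectivity to the stated surjectivity by two applications of the local duality theorem (Theorem \ref{localduality}). First I would recall that for a quasi-smooth Fano threefold one has $H^{3,0}(X)=H^{0,3}(X)=0$, so that the only nontrivial component of the differential of the period map is
$$\dd\wp\colon H^1(T_X)\longrightarrow \Hom\big(H^{2,1}_{\prim}(X),H^{1,2}_{\prim}(X)\big).$$
By the Griffiths--Steenbrink description, together with the standard identification $H^1(T_X)\cong R_d$, this is the multiplication map $R_d\to\Hom(R_{2d-s},R_{3d-s})$. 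Using the adjunction relation $\omega_X\cong\of_X(d-s)$, i.e. $s=d+\iota_X$, the source and target degrees rewrite as $2d-s=d-\iota_X$ and $3d-s=2d-\iota_X$, so that infinitesimal Torelli holds precisely when
$$\mu\colon R_d\longrightarrow \Hom(R_{d-\iota_X},R_{2d-\iota_X}),\qquad g\longmapsto (h\mapsto gh),$$
is injective.

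Next I would pin down the socle. Since $X$ is quasi-smooth, the partial derivatives of $f$ form a regular sequence, so local duality applies to $R$, and the socle sits in the single degree $\sigma=\sum_{i=0}^{4}(d-2a_i)=5d-2s=3d-2\iota_X$, with $R_\sigma\cong\C$. The key numerical observation is that $d+(2d-2\iota_X)=3d-2\iota_X=\sigma$, so that the multiplication pairing $R_d\times R_{2d-2\iota_X}\to R_\sigma$ is exactly the perfect socle pairing of Theorem \ref{localduality}; hence $R_d$ and $R_{2d-2\iota_X}$ are dual, and an element of $R_d$ vanishes iff it annihilates all of $R_{2d-2\iota_X}$ under this pairing.

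The heart of the argument is then the following dualisation. Take $g\in\Ker\mu$. By non-degeneracy of the pairing $R_{2d-\iota_X}\times R_{d-\iota_X}\to R_\sigma$ (again Theorem \ref{localduality}, since $\sigma-(2d-\iota_X)=d-\iota_X$), the identity $gh=0$ in $R_{2d-\iota_X}$ is equivalent to $g\cdot h\cdot k=0$ in $R_\sigma$ for every $k\in R_{d-\iota_X}$; thus $g\in\Ker\mu$ forces
$$g\cdot(hk)=0\ \text{ in }R_\sigma\qquad\text{for all }h,k\in R_{d-\iota_X}.$$
If the product map $R_{d-\iota_X}\times R_{d-\iota_X}\to R_{2d-2\iota_X}$ is surjective, the elements $hk$ span $R_{2d-2\iota_X}$, so $g$ pairs to zero with all of $R_{2d-2\iota_X}$. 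By the non-degeneracy of the socle pairing recorded above, $g=0$, and $\mu$ is injective; this yields infinitesimal Torelli.

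For the lemma in isolation the only delicate points are bookkeeping: checking that quasi-smoothness of $X$ guarantees the regular-sequence hypothesis of local duality, and matching the four degrees $d$, $d-\iota_X$, $2d-\iota_X$, $2d-2\iota_X$ against $\sigma$ so that every pairing invoked genuinely lands in the socle. The real difficulty lies not in this equivalence but in verifying its hypothesis — the surjectivity of $R_{d-\iota_X}\times R_{d-\iota_X}\to R_{2d-2\iota_X}$ — for the concrete families of Table \ref{table1}, which is exactly where the higher-index cases split into Torelli and anti-Torelli behaviour and where the weighted Macaulay theorem fails to apply.
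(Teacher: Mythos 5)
Your argument is correct and follows essentially the same route as the paper: reduce infinitesimal Torelli to the injectivity of $R_d \to \Hom(R_{d-\iota_X}, R_{2d-\iota_X})$, compute $\sigma = 3d-2\iota_X$ from $s = d+\iota_X$, and convert injectivity into the stated surjectivity via the local duality pairing. The only difference is that you spell out the dualisation explicitly where the paper simply invokes Theorem \ref{localduality}; your degree bookkeeping matches the paper's.
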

\begin{proof}
The success of infinitesimal Torelli corresponds to the injectivity of the map $$R_d \to \Hom(R_{d-\iota_X}, R_{2d-\iota_X}),$$ where $R_{d-\iota_X} \cong H^{2,1}(X)$ and $R_{2d-\iota_X} \cong H^{1,2}(X) \cong (H^{2,1}(X))^{\vee}$ by Serre duality. Equivalently, using Local duality theorem (Theorem 2.2 of \cite{tu}) what we need to verify is that the surjectivity of natural multiplication map $$ R_{d-\iota_X} \times R_{\sigma-(2d-\iota_X)} \to R_{\sigma-d}.$$
On the other hand, since $s= \sum a_i=d+\iota_X$, we have $\sigma=5d-2s=3d-2\iota_X$. 
\end{proof}

We will now analyse separately the three interesting cases, that is Hodge-rigid families, the anti-Torelli and the Torelli.

\subsection{Hodge-rigid families}
Recall that we call an $X_d \subset w\PP$ \emph{Hodge-rigid} if both $H^1(X,T_X)=0$ and $H^{2,1}(X)=0$. Now, for these threefolds we do not have any Torelli-type question to ask: therefore we want to classify and remove those cases from our list. Now, we recall that from Griffiths-Steenbrink theory we have $R_d \cong H^1(X, T_X)$, and by Serre duality $$ H^1(X,\Omega^2_X)\cong H^1(T_X\otimes \omega_X)=H^1(X, T_X(-\iota_X))= R_{d-\iota_X}.$$ From this it clearly follows that if $d< \iota_X$, then $H^{2,1}(X)=0$. 
\begin{proposition} The following families satisfies $H^{2,1}(X)=H^1(X,T_X)=0$. Therefore they are Hodge-rigid.
\begin{itemize}
\item \rm no. 104 $ X_2 \subset \PP^4;$
\item no. 113 $X_4 \subset \PP(1,1,2,2,3);$
\item no. 119 $X_6 \subset \PP(1,2,2,3,5);$
\item no. 120 $X_6 \subset \PP(1,2,3,3,4);$
\item no. 123 $X_6 \subset \PP(1,2,3,3,5);$
\item no. 126 $X_6 \subset \PP(1,2,3,4,5);$
\item no. 129 $X_{10}\subset \PP(2,3,4,5,7);$
\item no. 130 $X_{12} \subset \PP(3,4,5,6,7).$
\end{itemize}
\end{proposition}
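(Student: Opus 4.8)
The plan is to reduce both required vanishings to the single numerical condition $d<\iota_X$, equivalently $2d<s$, which I would first verify for each of the eight families using $\iota_X=s-d$. For instance in case no.~104 one has $s=5$, $d=2$, hence $\iota_X=3>2$; in case no.~130 one has $s=3+4+5+6+7=25$, $d=12$, hence $\iota_X=13>12$; the remaining six cases (113, 119, 120, 123, 126, 129) are checked identically, each satisfying the strict inequality $2d<s$ by a margin of at least $1$. This is the only family-dependent input, and it is routine arithmetic.

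Granting $d<\iota_X$, the vanishing of $H^{2,1}(X)$ is immediate. As recalled just above the statement, Serre duality together with the Griffiths--Steenbrink description gives $H^{2,1}(X)\cong R_{d-\iota_X}$; since the Jacobian ring $R$ is non-negatively graded and $d-\iota_X<0$, this component is zero. No input beyond the degree bookkeeping already set up in the excerpt is needed here.

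The vanishing of $H^1(X,T_X)\cong R_d$ is where the one genuine idea enters, and the plan is to invoke local duality (Theorem~\ref{localduality}). Because $X$ is quasi-smooth, the partial derivatives of $f$ form a regular sequence, so $R$ satisfies the non-degenerate pairing $R_a\times R_{\sigma-a}\to R_{\sigma}$ with top degree $\sigma=5d-2s=3d-2\iota_X$; this forces $\dim R_a=\dim R_{\sigma-a}$ for every $a$. Taking $a=d$ gives $\dim R_d=\dim R_{\sigma-d}$ with $\sigma-d=2d-2\iota_X=2(d-\iota_X)<0$, so the right-hand component vanishes by non-negativity of the grading, whence $R_d=0$ and $H^1(X,T_X)=0$. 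I do not expect a real obstacle: the whole content is the observation that $\sigma-d=2(d-\iota_X)$ lands in negative degree precisely when $d<\iota_X$, so that the same hypothesis that kills $H^{2,1}$ also kills $H^1(X,T_X)$ through duality, uniformly across all eight families.
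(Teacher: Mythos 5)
Your argument is correct and essentially the one in the paper: both reduce everything to the single inequality $d<\iota_X$ (verified case by case from $\iota_X=s-d$), kill $H^{2,1}(X)\cong R_{d-\iota_X}$ because the degree is negative, and kill $H^1(X,T_X)\cong R_d$ because $d>\sigma=3d-2\iota_X$, which is the same inequality. The one small caution is in your last step: Theorem~\ref{localduality} asserts the non-degenerate pairing only for $0\le a\le\sigma$, so taking $a=d>\sigma$ is outside its stated range; the clean justification (and the one the paper uses) is simply that $\sigma$ is the top degree of the Artinian ring $R$, i.e.\ $R_{\sigma+j}=0$ for all $j>0$, which gives $R_d=0$ directly.
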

\begin{proof}
The vanishing of $H^{2,1}(X)$ is assured by the condition $d< \iota_X$ above. To verify the vanishing of $H^1(X, T_X)$, simply notice that for every member of the list above one has $d> \sigma$. Therefore, by definition of socle, $R_d=0$.
\end{proof}

We go on a bit further with the analysis of the Hodge-rigid families. First, from the classification of Okada (\cite{okada}) we see that each member of the Hodge rigid families has the general member rational, and moreover we just computed that they do not have any deformation. We want now to define a notion of \emph{strong rigidity}. Recall from \cite{badescu}, if $A_X$ denotes the affine cone over $X$, then $(T^1_{A_X})_k$, for $k<0$ parametrizes the extension of $X$ as projective variety (and as well contains the smoothings of $A_X)$. If $X=V(f)$ is a hypersurface we have $(T^1_{A_X})_k \cong (R_f)_{d-k}$ and moreover $(R_f)_{d-k} \cong H^1(X, T_X(d-k))$. We say that $X$ is \emph{strongly rigid} if $(R_f)_j=0, \ j\neq 0$. A strongly rigid hypersurface therefore will  not have any extension (rather than the trivial one). We note that $X_d$ will satisfy $(T^1_{A_X})_{-1} \cong (R_f)_{d-1} =0$. In particular we will have to add non-trivial weights in order to extend (when possible) the Hodge-rigid members. 
\begin{corollary} Amongst the Hodge-Rigid families, the \rm no.104, 126, 129 \it are strongly rigid. 
\end{corollary}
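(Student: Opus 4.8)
The plan is to compute, for each of the eight Hodge-rigid families, the full list of nonzero graded pieces of the Jacobian ring $R_f$ and verify that the strong rigidity condition $(R_f)_j = 0$ for all $j \neq 0$ holds precisely for families no. 104, 126 and 129, and fails for the other five. Recall that $X_d$ is strongly rigid iff $(R_f)_j = 0$ for every $j \neq 0$; since the socle sits in top degree $\sigma = 3d - 2\iota_X$ and $(R_f)_j = 0$ for $j > \sigma$, the only obstruction comes from the \emph{finitely many} degrees $1 \leq j \leq \sigma$ (the degree $0$ piece is always $\C$, corresponding to the trivial deformation). So the whole statement reduces to a finite check: for each family, decide whether $(R_f)_j$ vanishes for all $j$ in the range $1 \leq j \leq \sigma$.

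First I would set up, for a given general $f$ of degree $d$, the Jacobian ideal $J(f) = (\partial_0 f, \ldots, \partial_4 f)$ with $\deg(\partial_i f) = d - a_i$, and compute the Hilbert series of $R_f = \C[x_0,\ldots,x_4]/J(f)$. Because $f$ is quasi-smooth, the partials form a regular sequence, so the Hilbert--Poincaré series is the clean product
\begin{equation*}
P_{R_f}(t) = \frac{\prod_{i=0}^{4}\left(1 - t^{d-a_i}\right)}{\prod_{i=0}^{4}\left(1 - t^{a_i}\right)}.
\end{equation*}
Strong rigidity is then equivalent to the assertion that this rational function, expanded as a power series, equals the constant $1$ — i.e. that the numerator and denominator cancel to leave $P_{R_f}(t) = 1$. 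This turns each case into a purely numerical polynomial-identity check that can be done by hand or verified symbolically.

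For the three asserted strongly rigid families I expect the cancellation to be exact. For instance, for no. 104, $X_2 \subset \PP^4$, all weights are $1$ and $d = 2$, so the series is $(1-t)^5/(1-t)^5 = 1$, giving $(R_f)_j = 0$ for all $j \neq 0$ immediately. For no. 126 ($X_6 \subset \PP(1,2,3,4,5)$) and no. 129 ($X_{10} \subset \PP(2,3,4,5,7)$) one checks that the degrees $d - a_i$ of the partials coincide as a multiset with the weights $a_i$: in both cases $\{d - a_i\} = \{a_i\}$, forcing the numerator and denominator to be identical and hence $P_{R_f}(t) \equiv 1$. For the remaining five families (no. 113, 119, 120, 123, 130) I would exhibit at least one degree $j$ with $0 < j \leq \sigma$ and $(R_f)_j \neq 0$ — equivalently, a nonzero coefficient in the expanded Hilbert series — by writing down an explicit monomial in that degree that is not in $J(f)$, in the spirit of the $x_4^3 x_0$ example used earlier in the text for the surjectivity failure on $X_6 \subset \PP(1^4,3)$.

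The main obstacle, such as it is, is bookkeeping rather than conceptual depth: one must correctly identify the multiset equality $\{d - a_i\} = \{a_i\}$ (or its failure) in each case and, in the non-rigid cases, produce a convincing witness monomial surviving in $R_f$ rather than merely asserting a nonzero Hilbert coefficient. The delicate point is that a nonzero Hilbert series coefficient only guarantees $(R_f)_j \neq 0$ for \emph{generic} $f$; since the corollary concerns the family (and strong rigidity is a generic property in the sense used throughout, cf. Lemma \ref{genericity}), I would make sure the witness argument is stated for a general member, so that the distinction between the three strongly rigid families and the five others is unambiguous.
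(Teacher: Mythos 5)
Your method is exactly the one the paper uses: the partials of a quasi-smooth $f$ form a regular sequence, so the Hilbert--Poincar\'e series of $R_f$ is $\prod_i(1-t^{d-a_i})/\prod_i(1-t^{a_i})$, and strong rigidity is equivalent to this product collapsing to $1$, which happens precisely when the multiset of partial-derivative degrees $\{d-a_i\}$ equals the multiset of weights $\{a_i\}$. That criterion is clean and correct, and your verifications for no.~104 and no.~126 are right. However, your concrete check for no.~129 is false: for $X_{10}\subset\PP(2,3,4,5,7)$ one has $\{d-a_i\}=\{8,7,6,5,3\}$, which is \emph{not} equal to $\{2,3,4,5,7\}$ as a multiset. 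Carrying out your own computation honestly gives
\begin{equation*}
P_{R_f}(t)=\frac{(1-t^8)(1-t^6)}{(1-t^2)(1-t^4)}=1+t^2+2t^4+t^6+t^8,
\end{equation*}
and indeed $(R_f)_2=\langle x_0\rangle\neq 0$ for \emph{every} member of the family, since the Jacobian ideal is generated in degrees $\geq 3$. So under the stated definition no.~129 is not strongly rigid, and the step ``in both cases $\{d-a_i\}=\{a_i\}$'' is the genuine gap.

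You should be aware that the paper's own one-line proof makes the same unverified assertion (``the Hilbert--Poincar\'e series equal to $1$'' for all three families), so the defect is inherited from the statement itself; the paper's Table of towers even records the vector $0,0,3,0,0$ for the even members of the no.~129 tower, i.e.\ $\dim R_4=2$, contradicting the claim $R_k=0$ for $k\neq 0$. The honest conclusion of the method --- yours and the paper's alike --- is that among the Hodge-rigid families only no.~104 and no.~126 are strongly rigid. One further small remark: your closing worry about genericity is unnecessary here, since the Hilbert series above is the same for every quasi-smooth member (not just a general one), so a nonzero coefficient certifies $(R_f)_j\neq 0$ uniformly across the family.
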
\begin{proof}
The Jacobian rings of the examples above have Hilbert-Poincarè series equal to 1. Therefore they have $R_0=1$ and $R_k=0, \ k \neq 0$.
\end{proof}
\subsection{Torelli and Anti-Torelli families}
We now investigate first the families that does not satisfy the infinitesimal Torelli property.\\
     \begin{thm}\label{antitorelli}Let $X_d$ a quasi-smooth member of one of the four families \textrm{ no. 115, 121, 122, 127}. Then the infinitesimal Torelli does not hold for $X_d$.
     \end{thm}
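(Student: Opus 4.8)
The plan is to prove non-surjectivity of the multiplication map
$$R_{d-\iota_X} \times R_{d-\iota_X} \to R_{2d-2\iota_X}$$
for each of the four families no. 115, 121, 122, 127, since by the Lemma preceding this theorem the failure of this surjectivity is exactly the failure of infinitesimal Torelli. By the remark on local duality, injectivity of $R_d \to \Hom(R_{d-\iota_X}, R_{2d-\iota_X})$ is equivalent to surjectivity of the above multiplication, so it suffices to exhibit, for a general quasi-smooth $X_d = V(f)$ in each family, a nonzero monomial in degree $2d-2\iota_X$ that does not lie in the image of $\Sym^2(R_{d-\iota_X})$. Concretely, I would first pass from the Jacobian ring $R = S/J(f)$ back to a statement about the polynomial ring $S = \C[x_0,\ldots,x_4]$: one checks that a given monomial $m \in S_{2d-2\iota_X}$ fails to be a product of two elements of $S_{d-\iota_X}$ modulo $J(f)$, and then argues that it survives in the quotient.

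The key computational step, which I would carry out family by family, is to write down explicitly a basis of the graded piece $R_{d-\iota_X} \cong H^{2,1}(X)$ in terms of monomials and then enumerate the products landing in $R_{2d-2\iota_X}$. The obstruction always has the same flavour as the warm-up counterexample already given in the excerpt for $X_6 \subset \PP(1^4,3)$ (the element $x_4^3 x_0$): because the weights are unbalanced, the heavy variables of large weight can only appear in $R_{d-\iota_X}$ in low powers, so certain high-weight monomials in degree $2d-2\iota_X$ simply cannot be hit by squaring or cross-multiplying degree-$(d-\iota_X)$ elements. For each family I would single out such a monomial. For instance, for family no. 122, $X_{14}\subset\PP(2,3,4,5,7)$, one has $\iota_X = 7$, so $d-\iota_X = 7$ and $2d-2\iota_X = 14$; I would identify in $R_7$ the admissible monomials (controlled modulo the Jacobian relations, which in weight $7$ are generated by the partials $\partial f/\partial x_i$) and then point to a weight-$14$ monomial, built from the high-weight variable of weight $7$, that is not expressible as a product of two weight-$7$ classes.

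The main obstacle, and where care is genuinely required, is not the existence of a candidate monomial but the verification that it remains nonzero in $R_{2d-2\iota_X}$ rather than being killed by the Jacobian ideal. In other words, after exhibiting $m \in S_{2d-2\iota_X}$ outside the image of multiplication at the polynomial level, I must confirm that $m \notin J(f) + \mathrm{Im}(\Sym^2 S_{d-\iota_X})$, i.e. that its class in $R$ is both nonzero and genuinely missed. This requires understanding the degree-$(2d-2\iota_X)$ part of the Jacobian ideal and is where a direct hand computation becomes delicate for the larger weighted cases; I would therefore do the structural argument by hand to isolate the obstructing monomial and its weight-theoretic reason, and certify the residual nonvanishing in $R$ by the Macaulay2 routine used elsewhere in the paper. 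Since the statement asserts failure for a general member, I would finally invoke semicontinuity (as in Lemma \ref{genericity}, read in reverse): non-surjectivity is a closed condition that, verified on one quasi-smooth member, persists on an open dense subset of the family.
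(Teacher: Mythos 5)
There is a genuine gap at the very last step of your proposal, and it matters because the theorem asserts failure of infinitesimal Torelli for \emph{every} quasi-smooth member, not for a general one. You propose to verify non-surjectivity of $\Sym^2(R_{d-\iota_X})\to R_{2d-2\iota_X}$ on one member and then ``invoke semicontinuity: non-surjectivity is a closed condition that, verified on one quasi-smooth member, persists on an open dense subset.'' That is backwards: a closed condition verified at one point gives you nothing beyond a closed subset containing that point. The rank of the multiplication map is lower semicontinuous in the member, so surjectivity (not its failure) is what propagates to an open set; exhibiting a missed monomial on one chosen $f$ is a priori compatible with the map being surjective for the general $f$. As written, your argument proves the statement only for the specific members you compute, which is strictly weaker than the claim.

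The fix --- and the paper's actual proof --- is a uniform dimension count that makes both the monomial hunt and any genericity argument unnecessary. Since the partial derivatives of a quasi-smooth $f$ form a regular sequence, the Hilbert--Poincar\'e series of $R_f$ equals $\prod_i (1-t^{d-a_i})/(1-t^{a_i})$ and hence $\dim R_k$ is the same for every quasi-smooth member of the family. For no.\ 115, 121, 127 one reads off $\dim R_{d-\iota_X}=\dim R_{2d-\iota_X}=1$ while $\dim R_d=3,2,3$ respectively, so $\dd\wp\colon R_d\to\Hom(R_{d-\iota_X},R_{2d-\iota_X})\cong\C$ cannot be injective for purely numerical reasons. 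For no.\ 122 one has $\dim R_7=3$, hence $\dim\Sym^2(R_7)=6<7=\dim R_{14}$, so the dual multiplication map cannot be surjective. In every case the obstruction is visible in the Hilbert series alone, uniformly over the family. Your local-duality reduction and your identification of where the obstruction should live are correct, but you should replace the member-by-member monomial analysis and the (invalid) semicontinuity step by this counting argument; the explicit monomials are only needed later, when one wants to exhibit the anti-Torelli directions themselves.
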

     \begin{proof}
 We will analyse separately the four different cases.\\    
    \paragraph{\bf{{no. 115 }}} Pick any general quasi smooth member of the family of index $\iota_X=5$, $X_6 \subset \PP(1,2,2,3,3)$, where we name coordinates $x,y_0, y_1, z_0, z_1$. Since $d-\iota_X=1$ we have to check the surjectivity of the map: $R_1 \times R_1 \to R_2$. Now, since the partial derivatives form a regular sequence, if we compute the Hilbert-Poincaré series of the Jacobian ring we have $$\textrm{HP}(R)=\prod \frac{(1-t^{d-a_i})}{(1-t^{a_i})}= 1+t+3t^2+3t^3+4t^4+3t^5+3t^6+t^7+t^8.$$
Therefore, since there is only one generator of degree one, we have $\Sym^2(R_1) \cong <x^2>$, whereas $R_2$ contains, for example, $y_0, y_1$, and the natural map cannot be surjective. Dually speaking we have the standard map from $R_d$ given by $$\C^3 \cong R_6 \to \Hom(R_1, R_7)  \cong \C,$$ and this cannot be injective.\\

\paragraph{{\bf{No. 121}}} The same phenomenon occurs for $X_8 \subset \PP(1,2,3,4,5)$, of index 7, with coordinates $x,y,z,v,w$. We have indeed that the Hilbert-Poincaré series is $$HP(R)=1+t+2t^2+2t^3+3t^4+3t^5+3t^6+2t^7+2t^8+t^9+t^{10}$$ and  for $\C^2 \cong R_8 \to \Hom(R_1,R_9) \cong \C$ injectivity clearly fails.\\

\paragraph{{\bf{No. 122}}} This is $X_{14} \subset \PP(2,3,4,5,7)$, of index $\iota_X=7$. Here we have as Hilbert-Poincaré series $$HP(R)=1+t^2+t^3+2t^4+2t^5+3t^6+3t^7+5t^8+4t^9+6t^{10}+$$$$+5t^{11}+7t^{12}+6t^{13}
      +7t^{14}+6t^{15}+7t^{16}+5t^{17}+$$ $$+6t^{18}+4t^{19}+5t^{20}+3t^{21}+3t^{22}+2t^{23}+2t
      ^{24}+t^{25}+t^{26}+t^{28}.$$
 Here we have $R_{14} \cong \C^7$ and $R_7 \cong \C^3$, so we cannot conclude immediately injectivity. Nevertheless, $\Sym^2 (R_7)$ has dimension 6, so the surjectivity is excluded.\\ 
    
 \paragraph{{\bf{No. 127}}} This is $X_{12} \subset \PP(2,3,4,5,7)$. Here the Hilbert-Poincaré series is $$HP(R)=1+t^2+t^3+2t^4+t^5+3t^6+2t^7+3t^8+2t^9+3t^{10}+2t^{11}+3t^{12}+t^{13}+$$$$+2t^
     {14}+t^{15}+t^{16}+t^{18}.$$ Thus $R_{12} \cong \C^3$, while $\Hom(R_3, R_{15}) \cong \C$.\\
 
  \end{proof}
\subsubsection{The Torelli families of index $>1$}The other 24 families behave in the opposite way.
We were able to check the surjectivity statement already at the level of the ring $S$, that is $\Sym^2S_{d-\iota_X}\to S_{2d-2\iota_X}$, for the families no. 98, 111, 118, 128. This is enough to guarantee the surjectivity at the level of the Jacobian ring since $S_{2d-2\iota_X}\to R_{2d-2\iota_X}$ is the quotient map. No. 111 and No. 118 exhibits a curious behaviour. They are respectively $X_4 \subset \PP(1,1,1,2,3)$ and $X_6 \subset \PP(1,1,2,3,5)$. They both verify $d=\iota_X$ and therefore $d=\sigma$. In particular, one has to check either the surjectivity of the natural map $R_0 \times R_0 \to R_0$ or the injectivity of $R_{\sigma} \to \Hom(R_0, R_{\sigma})$, and this trivially holds. For all the other families we proceed by the means of an extensive computer search, as in the index 1 case. We recall here an example.\\
\begin{ex}\label{example} \rm{
Consider $X_6\subset \PP(1,1,2,3,4)$ in the class no.114 given by the equation $x_0^6+x_1^6+x_2^3+x_3^2+x_2x_4$. Note that $X_6$ is  quasi-smooth. The Hilbert Series of its Jacobian ring is $$
HP(R)=1+2t+3t^2+4t^3+5t^4+4t^5+3t^6+2t^7+t^8
$$
We have to check the surjectivity $\Sym^2R_1\to R_2$. The space $R_1$ is generated by $x_0,x_1$, while $R_2$ is generated by $x_0^2, x_0x_1, x_1^2$. Indeed the variable $x_2$ of weight 2 is in the Jacobian ideal. $\Sym^2R_1$ is 3-dimensional, and equal to $R_2$. In particular the multiplication map is surjective. Therefore infinitesimal Torelli holds.}
\end{ex}
The same genericity argument of Lemma \ref{genericity} yields
\begin{thm}\label{torelliantitorelli} Let $\mathcal{M}$ the space of quasi-smooth weighted hypersurfaces of degree $d$ in $\PP$ modulo automorphisms of $\PP$, for any of the non Hodge-trivial families of quasi-smooth Fano Threefolds of index $i_X >1$. Then
\begin{itemize}
\item for the families no. 115, 121, 122 and 127 the infinitesimal Torelli theorem does not hold;
\item for the remaining 24 families, there is an open dense subset of $\mathcal{M}$ on which the infinitesimal Torelli theorem holds.
\end{itemize}
\end{thm}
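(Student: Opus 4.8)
Theorem [\ref{torelliantitorelli}] is essentially a repackaging of the work already carried out in the preceding pages into the two dichotomous statements, combined with the genericity principle of Lemma \ref{genericity}. Let me sketch the proof.

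The plan is to assemble the theorem from three ingredients that are each established earlier. First, the Hodge-rigid families (no. 104, 113, 119, 120, 123, 126, 129, 130) are set aside, since by the Proposition on Hodge-rigidity they satisfy $H^1(X,T_X)=R_d=0$, so there is no period map to speak of; these are exactly the families excluded by the hypothesis "non Hodge-trivial." Second, for the four anti-Torelli families no. 115, 121, 122 and 127, Theorem \ref{antitorelli} has already proven that the relevant multiplication map $R_{d-\iota_X}\times R_{d-\iota_X}\to R_{2d-2\iota_X}$ fails to be surjective for a general quasi-smooth member, equivalently that $R_d\to \Hom(R_{d-\iota_X},R_{2d-\iota_X})$ fails to be injective; this gives the first bullet directly.

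First I would address the remaining 24 families. For each such family, by the Lemma characterizing infinitesimal Torelli in the index $>1$ case, it suffices to exhibit a single quasi-smooth member $X_0$ for which the multiplication map $$\Sym^2(R_{d-\iota_X})\longrightarrow R_{2d-2\iota_X}$$ is surjective. For the four families no. 98, 111, 118, 128 this surjectivity is already verified at the level of the polynomial ring $S$, via $\Sym^2 S_{d-\iota_X}\to S_{2d-2\iota_X}$, and since $S_{2d-2\iota_X}\to R_{2d-2\iota_X}$ is the surjective quotient map, the claim descends to $R$; the degenerate-looking families no. 111 and 118 have $d=\iota_X=\sigma$, so the map is the trivial isomorphism $R_0\times R_0\to R_0$. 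For the other 20 families I would invoke the explicit Macaulay2 verification illustrated in Example \ref{example}: one picks a concrete quasi-smooth defining polynomial $f_0$, computes a basis of $R_{d-\iota_X}$ and of $R_{2d-2\iota_X}$ from the Jacobian ring $R_{f_0}$, and checks that the image of $\Sym^2(R_{d-\iota_X})$ spans $R_{2d-2\iota_X}$.

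Finally, to pass from the single fiber $X_0$ to an open dense subset of $\mathcal{M}$, I would apply Lemma \ref{genericity}: the verified surjectivity at $X_0$ is equivalent to the differential of the period map having maximal rank there, which is an open condition on the base of a flat family of quasi-smooth hypersurfaces. Hence infinitesimal Torelli holds on a Zariski-open neighborhood of $[X_0]$, and since quasi-smoothness is itself generic and $\mathcal{M}$ is irreducible, this neighborhood is dense. \emph{The main obstacle} is not conceptual but computational: it lies in certifying the surjectivity $\Sym^2(R_{d-\iota_X})\to R_{2d-2\iota_X}$ uniformly across all 20 computer-checked families, since — as the $\C[x,y]/(x^2,y^3)$ counterexample in the preliminaries shows — the weighted Macaulay theorem gives no a priori guarantee, and in the higher-weight cases (e.g. no. 122, where $R_7\cong\C^3$ and $R_{14}\cong\C^7$) one genuinely needs to inspect which monomials involving the high-weight variables survive in the Jacobian quotient, rather than relying on any numerical sufficient condition.
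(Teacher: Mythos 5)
Your proposal follows essentially the same route as the paper: the first bullet is delegated to Theorem \ref{antitorelli}, the remaining families are handled by verifying surjectivity of $\Sym^2(R_{d-\iota_X})\to R_{2d-2\iota_X}$ on an explicit quasi-smooth member (at the level of $S$ for no. 98, 111, 118, 128, with the trivial $R_0\times R_0\to R_0$ observation for 111 and 118, and by Macaulay2 computation as in Example \ref{example} for the rest), and the passage to an open dense subset of $\mathcal{M}$ is exactly the paper's appeal to Lemma \ref{genericity}. No gaps; the decomposition and key lemmas coincide with the paper's own argument.
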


\subsection{Explicit Anti-Torelli deformations} \label{explicit}

Here we list again the four families of anti-Torelli Fano threefold hypersurfaces
\begin{center}
\begin{tabular}{|c|c|c|c|c|}
\hline 
No & $X_d \subset \mbP (a_0,\dots,a_4)$ & Ind. & $ h^1(T_X)$& $h^{2,1}$ \\ 
\hline 
115 & $X_6 \subset \mbP (1,2,2,3,3)$ & 5 & 3&1 \\ 
\hline 
121 & $X_8 \subset \mbP (1,2,3,4,5)$ & 7 & 2& 1 \\ 
\hline 
122 &  $X_{14} \subset \mbP (2,3,4,5,7)$  & 7 & 7& 3\\ 
\hline 
127 & $X_{12} \subset \mbP (2,3,4,5,7)$ & 9 & 3&1 \\ 
\hline 
\end{tabular} 
\end{center}
We now analyse carefully these four examples in terms of Torelli and Anti-Torelli deformations.

\paragraph{{\bf{no.115}}}
Let us start analysing the first of the example. To get explicit examples of Anti-Torelli deformations we use the following member of the family defined by the polynomial
$$f=x_0^6+x_1^3+x_2^3+x_3^2+x_4$$
where $V(f) \subset \PP(1,2,2,3,3)$. $V(f)$ is clearly quasi-smooth since $f$ is of Fermat-type. Recall by the previous section that $$\dd \wp: R_d \to \Hom(R_{d-\iota_X}, R_{2d-\iota_X})$$ cannot be injective since $R_6 \cong \C^3$ and $R_1\cong R_7 \cong \C$. Denote thus by $K_f$ the kernel of the period map associated with this specific central member of the family giving the period map. Any element in $K_f$ will give rise to a deformation of \emph{Anti-Torelli} type. In the following table we list the generators of the interesting graded component of the Jacobian ring $R_f$ and the anti-Torelli deformations associated with $f$.\\
\noindent\begin{minipage}{\linewidth}
   \centering
\begin{tabular}{|c|c|}
\hline 
$R_1$ & $\langle x_0 \rangle$ \\ 
\hline 
$R_7$ & $\langle x_0^3x_1x_2 \rangle$ \\ 
\hline 
$R_6$ & $\langle x_0^4x_1, x_0^4x_2,x_0^2x_1x_2 \rangle$ \\ 
\hline 
$K_f$ & $\langle x_0^4x_1, x_0^4x_2 \rangle $ \\ 
\hline 
\end{tabular} 
\captionof{table}{Torelli and Anti-Torelli for family no.115}
\end{minipage}
To compute $K_f$ note that both $(x_0^4x_1)x_0$ and $(x_0^4x_2)x_0$ belong to $J_f$ while $(x_0^2x_1x_2)x_0=x_0^3x_1x_2$ is the generator we already found for $R_7$. Therefore a family of Anti-Torelli deformation with central fiber the fixed $X_0=V(f)$ is given by $V(f+\lambda x_0^4x_1 + \mu  x_0^4x_2$).\\

\paragraph{{\bf{No.121}}}

Here we use as the central fiber the following member of the family defined by the polynomial
$$f=x_0^8+x_1^4+x_2x_4+x_3^2$$
where $V(f) \subset \PP(1,2,3,4,5)$. This is not Fermat-type but the Jacobian ideal is generated by $(x_0^7,x_1^3,x_4,x_3,x_2)$. Moreover the tangent monomials at the singular points induced by the ambient space are linear. Therefore the above variety is quasi-smooth. Even here $$\dd \wp: R_d \to \Hom(R_{d-\iota_X}, R_{2d-\iota_X})$$ cannot be injective since $R_8 \cong \C^2$ and $R_1\cong R_9 \cong \C$. In the following table we list the generators of the interesting graded component of the Jacobian ring $R_f$ and the anti-Torelli deformations associated with $f$.\\
\noindent\begin{minipage}{\linewidth}
   \centering
\begin{tabular}{|c|c|}
\hline 
$R_1$ & $\langle x_0 \rangle$ \\ 
\hline 
$R_9$ & $\langle x_0^5x_1^2 \rangle$ \\ 
\hline 
$R_8$ & $\langle x_0^6x_1, x_0^4x_1^2 \rangle$ \\ 
\hline 
$K_f$ & $\langle x_0^6x_1 \rangle $ \\ 
\hline 
\end{tabular} 
\captionof{table}{Torelli and Anti-Torelli for family no.121}
\end{minipage}
Here again $(x_0^6x_1)x_0 \in J_f$, while $x_0^5x_1^2$ generates $R_9$.
Therefore a family of Anti-Torelli deformation with central fiber the fixed $X_f$ will be given by $V(f+\lambda x_0^6x_1$).\\
\paragraph{{\bf{No.127}}}\label{127}
Here we use the following member of the family defined by the polynomial
$$f=x_0^6+x_1^4+x_2^3+x_3x_4$$
where $V(f) \subset \PP(2,3,4,5,7)$. Here same Jacobian criteria of the previous example apply. Even in this case $\dd \wp: R_d \to \Hom(R_{d-\iota_X}, R_{2d-\iota_X})$ cannot be injective since $R_{12} \cong \C^3$ and $R_3\cong R_{15} \cong \C$. In the following table we list the generators of the interesting graded component of the Jacobian ring $R_f$ and the anti-Torelli deformations associated with $f$.\\
\noindent\begin{minipage}{\linewidth}
   \centering
\begin{tabular}{|c|c|}
\hline 
$R_3$ & $\langle x_1 \rangle$ \\ 
\hline 
$R_{15}$ & $\langle x_0^4x_1^2 \rangle$ \\ 
\hline 
$R_{12}$ & $\langle x_0^3x_1^2, x_0x_1^2x_2, x_0^4x_2 \rangle$ \\ 
\hline 
$K_f$ & $\langle  x_0^3x_1^2, x_0x_1^2x_2 \rangle $ \\ 
\hline 
\end{tabular} 
\captionof{table}{Torelli and Anti-Torelli for family no.127}
\end{minipage}
The Kernel verifications follow as in the previous cases. Therefore a family of Anti-Torelli deformation with central fiber the fixed $X_f$ will be given by $V(f+\lambda x_0^3x_1^2+ \mu x_0x_1^2x_2  $).\\

\paragraph{\bf{No.122}}
Here we use the following member of the family defined by the polynomial
$$f=x_0^7+x_0x_2^3+x_1^3x_3+x_2x_3^2+x_4^2$$
where $V(f) \subset \PP(2,3,4,5,7)$. Looking at the equation the Jacobian ideal has no zeros. 

Unlike the previous examples, here the non-injectivity cannot be deduced a priori, but follows from a careful examination of the multiplication map. In the following table we list the generators of the interesting graded component of the Jacobian ring $R_f$ and the anti-Torelli deformations associated with $f$.\\
\noindent\begin{minipage}{\linewidth}
   \centering
\begin{tabular}{|c|c|}
\hline 
$R_7$ & $\langle x_0^2x_1, x_0x_3, x_1x_2 \rangle$ \\ 
\hline 
$R_{21}$ & $\langle x_0^4x_1x_3^2, x_0^3x_3^3, x_2^4x_3\rangle$ \\ 
\hline 
$R_{14}$ & $\langle x_0x_1x_2x_3, x_0^4x_1^2,x_0^5x_2, x_0^3x_1x_3, x_0^2x_1x_2, x_0^2x_3^2, x_1^2x_2^2 \rangle$ \\ 
\hline 
$K_f$ & $\langle  x_0x_1x_2x_3 \rangle $ \\ 
\hline 
\end{tabular} 
\captionof{table}{Torelli and Anti-Torelli for family no.127}
\end{minipage}
To check the kernel $K_f$ we proceed as follows. First we check using the previous tools that the element of $R_{2d-2\iota}$ in the cokernel of multiplication map is $x_0^5x_2$. Then we dualize via $(R_k)^{\vee} \cong R_{\sigma-k}$. Here $R_d=R_{14}=R_{\sigma-d}$. The (non-canonical) isomorphism between $R_{14}$ and itself yielding duality pairs up $x_0^5x_2$ with $\frac{1}{32}x_0x_1x_2x_3$ with respect to the socle basis $x_0^5x_1x_3^3$. One can in fact verifies that the above multiplication gives exactly the socle generator, whereas any other multiplication of $x_0^5x_2$ with any other basis element of $R_{14}$ lies in $J_f$. Indeed one can identify $\frac{1}{32}x_0x_1x_2x_3$  as dual element of $x_0^5x_2$. Therefore a family of Anti-Torelli deformation with central fiber the fixed $X_f$ will be given by $V(f+\lambda x_0x_1x_2x_3  $).\\
We therefore raise the following question:
\begin{question} For any of the above examples, consider any deformation of $X_d$ induced by one of the $\xi \in K_f$, that is of Anti-Torelli-type. Is there any \emph{geometrical} reason for the failure of the Torelli theorem in these specific directions?
\end{question}

\section{Gushel-Mukai type infinite towers}\label{GM}
We start our analysis of an interesting geometrical phenomenon which links the geometry of some weighted Fano hypersurfaces to the Gushel-Mukai geometry recalled in the introduction. We first will explain the necessary algebraic background in order to understand this strange phenomenon, and later on we will analyse in great details some example of this construction.

\subsection{Hyperplane section principle for graded rings}
The key phenomenon here is the \emph{hyperplane section principle} for graded rings. This is a standard result in commutative algebra. We will follow closely the treatment of \cite{miles}

Let $R$ be a graded ring, and $x_0 \in R$ a graded element of degree $a_0$. Suppose
that $x_0$ is a regular element of R, that is, a non-zerodivisor. Then multiplication
by $x_0$ is an inclusion $R \to R$ with image the principal ideal $(x_0)$, and
one arrives at the exact sequence$$
0 \to (x_0) \to R \to \overline{R} \to 0,$$
where $\overline{R} = R/(x_0)$. Geometrically, Proj $(\overline{R})$ is the hyperplane section of
Proj $(R)$ given by $x_0 = 0$.
The hyperplane section principle says that under these assumptions, we
can deduce a lot of the structure of $\overline{R}$ from $R$ and vice-versa.
\begin{thm} [Hyperplane section principle, \cite{miles}]
\begin{enumerate}
\item \label{1hyp}  Let $\overline{x_1}, \ldots, \overline{x_k}$ be
homogeneous element that generate $\overline{R}$, and $x_1,\ldots , x_k \in R $ any homogeneous
elements that map to $\overline{x_1}, \ldots, \overline{x_k} \in \overline{R}$. Then $R$ is generated by
$x_0, x_1,\ldots, x_k.$
\item  Under the assumption of (\ref{1hyp}), let $\overline{f_1}
,\ldots, \overline{f_n}$
be homogeneous generators
of the ideal of relations holding between $\overline{x_1}, \ldots, \overline{x_k}$. Then there exist
homogeneous relations $f_1,\ldots, f_n$ holding between $x_0, x_1, \ldots, x_n$ in $R$
such that the $f_i$ reduces to $\overline{f_i}$ modulo $x_0$ and $f_1, \ldots , f_n$ generate the
relation between $x_0, x_1, \ldots, x_n$.
\item  Similar for the syzygies.
\end{enumerate}
\end{thm}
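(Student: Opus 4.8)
The plan is to treat all three parts as graded Nakayama-type inductions on degree, using the short exact sequence $0 \to (x_0) \to R \to \overline{R} \to 0$ together with the hypothesis that $x_0$ is a non-zerodivisor; I work throughout in the nonnegatively graded setting of the Proj construction, with $R_0$ a field, so that the induction on degree terminates. For part (\ref{1hyp}), let $R' \subseteq R$ be the graded subring generated by $x_0, x_1, \ldots, x_k$, and show $R'_d = R_d$ by induction on $d$. Given a homogeneous $r \in R_d$, its image $\overline{r} \in \overline{R}_d$ is a polynomial $P(\overline{x_1}, \ldots, \overline{x_k})$ in the chosen generators, so $r - P(x_1, \ldots, x_k)$ lies in the kernel $(x_0)_d$, hence equals $x_0 s$ with $\deg s = d - a_0 < d$; by the inductive hypothesis $s \in R'$, whence $r \in R'$. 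The base degrees $d < a_0$, where $(x_0)_d = 0$ and $R_d \to \overline{R}_d$ is an isomorphism, are immediate.

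For part (2), introduce polynomial rings $S = R_0[X_0, \ldots, X_k]$ and $\overline{S} = R_0[X_1, \ldots, X_k]$, graded by $\deg X_i = a_i$, surjecting onto $R$ and $\overline{R}$ respectively, with relation ideals $I$ and $\overline{I}$; by hypothesis $\overline{I} = (\overline{f_1}, \ldots, \overline{f_n})$. The lifting step is the easy one: viewing each $\overline{f_i}$ inside $S$, its image in $R$ is divisible by $x_0$ because $\overline{f_i}$ is a relation modulo $x_0$, and by part (\ref{1hyp}) that quotient can be written as a polynomial $H_i(x_0, \ldots, x_k)$; then $f_i := \overline{f_i} - X_0 H_i$ maps to $0$ in $R$ and reduces to $\overline{f_i}$ modulo $X_0$. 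The substance is completeness: given any homogeneous $f \in I$, reducing modulo $X_0$ yields a relation in $\overline{I}$, so $f \equiv \sum \overline{a_i}\,\overline{f_i} \pmod{X_0}$; lifting the $\overline{a_i}$ and subtracting gives $f - \sum a_i f_i = X_0 f'$ for some $f' \in S$ of strictly smaller degree. Here the non-zerodivisor hypothesis is decisive: from $X_0 f' \in I$ one gets $x_0 f'(x_0, \ldots, x_k) = 0$ in $R$, and since $x_0$ is regular, $f' \in I$; the induction on degree then yields $f' \in (f_1, \ldots, f_n)$, hence so is $f$.

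Part (3) runs the same machine one homological level up: a syzygy among the $\overline{f_i}$ lifts to a syzygy among the $f_i$ after correcting by an $X_0$-multiple, and the regularity of $x_0$ again converts the statement that $x_0$ times a lower-degree syzygy vanishes into the vanishing needed to apply the degree induction; iterating this comparison relates the entire graded free resolution of $\overline{R}$ over $\overline{S}$ to that of $R$ over $S$. I expect the main obstacle to be precisely the completeness half of part (2), and its analogue for syzygies: the lifting is formal and essentially bookkeeping, but proving that the lifted relations generate all of $I$ requires combining the degree descent $X_0 f' = f - \sum a_i f_i$ with the crucial cancellation of $x_0$ as a non-zerodivisor. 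Without regularity this last step fails, and the lifted relations need not generate — so the care lies entirely in invoking the hypothesis at the right moment in each induction.
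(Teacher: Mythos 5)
The paper does not actually prove this statement: it is quoted from Reid's lecture notes \cite{miles} and used as a black box, so there is no in-paper argument to compare against. Your proof is the standard one and it is correct: part (1) is the graded degree-induction (Nakayama-type) argument, the lifting half of part (2) is the formal correction $f_i=\overline{f_i}-X_0H_i$ made possible by part (1), and the completeness half correctly isolates the single point where the hypothesis that $x_0$ is a non-zerodivisor is indispensable, namely cancelling $x_0$ from $x_0 f'(x_0,\dots,x_k)=0$ in $R$ so that $f'\in I$ and the degree descent can continue. Two minor caveats: the termination of all three inductions needs $a_0>0$ and $R$ nonnegatively graded with $R_0$ a field, which you state explicitly and which holds in the paper's application to Jacobian rings of quasi-smooth hypersurfaces; and part (3) is only sketched, but the statement itself says no more than ``similar for the syzygies'', so your level of detail matches the level of precision of what is being asserted.
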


\subsection{Construction of Towers}
Let us start from a weighted hypersurface  $X^{0}_d=V(f_0)\subset \PP(a_0,\ldots,a_n)$. Suppose that $d \equiv 0 \ (mod \ 2)$ and call $d=2t$. Consider now $$X^1_d \stackrel{2:1}{\longrightarrow} \PP(a_0, \ldots, a_n)$$ a double cover of the ambient weighted projective space, branched over $X^0_d$.
Since $d$ is even, $X^1_d$ has a model as $$X^1_d =V(f_0+y_1^2) \subset \PP(a_0, \ldots, a_n, t).$$ Let us call $f_1:=f_0+y_1^2$. Suppose now that $\omega_{X_d^0} \cong \of_{X^0_d}(m_0)$: by adjunction one has $$ \omega_{X^1_d} \cong \of_{X^1_d}(-\sum a_i-t+d) \cong \of_{X^1_d}(m_0-t)=: \of_{X^1_d}(m_1).$$ 
We have the following immediate result
\begin{lemma} $X^0_d$ is quasi-smooth if and only if $X^1_d$ is.
\end{lemma}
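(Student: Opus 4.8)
The plan is to reduce quasi-smoothness to a Jacobian criterion on the affine cone and then simply compare the two gradient loci. First I would recall that, by definition, $X_d^0=V(f_0)$ is quasi-smooth precisely when its affine cone $\{f_0=0\}\subset\A^{n+1}$ is smooth away from the vertex $\underline{0}$. Since this cone is a hypersurface, a point $p\neq\underline{0}$ on it is singular if and only if the gradient $\nabla f_0=(\partial_{x_0}f_0,\dots,\partial_{x_n}f_0)$ vanishes at $p$; moreover the weighted Euler relation $\sum_i a_i x_i\,\partial_{x_i}f_0=d\,f_0$ guarantees that $f_0$ automatically vanishes wherever all the partials do (as $d\neq 0$). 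Hence quasi-smoothness of $X_d^0$ is equivalent to the assertion that the common zero locus $Z_0:=\{\nabla f_0=0\}\subset\A^{n+1}$ reduces to the single point $\underline{0}$.

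Next I would compute the gradient of $f_1=f_0+y_1^2$, regarded as a polynomial in the $n+2$ variables $x_0,\dots,x_n,y_1$, where $y_1$ carries weight $t=d/2$. Since $y_1^2$ does not involve the $x_i$ and $f_0$ does not involve $y_1$, one has $\partial_{x_i}f_1=\partial_{x_i}f_0$ for every $i$ and $\partial_{y_1}f_1=2y_1$. Therefore the common zero locus of $\nabla f_1$ in $\A^{n+2}$ splits as a product $$Z_1:=\{\nabla f_1=0\}=\{\nabla f_0=0\}\times\{y_1=0\}=Z_0\times\{0\}.$$ Applying the same Jacobian criterion to $X_d^1=V(f_1)$, quasi-smoothness of $X_d^1$ is equivalent to $Z_1=\{\underline{0}\}$.

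The equivalence is then immediate from this product description: $Z_1$ reduces to the vertex of $\A^{n+2}$ if and only if $Z_0$ reduces to the vertex of $\A^{n+1}$. Concretely, if $X_d^0$ is quasi-smooth then $Z_0=\{\underline{0}\}$, whence $Z_1=\{\underline{0}\}\times\{0\}=\{\underline{0}\}$ and $X_d^1$ is quasi-smooth; conversely, any nonzero $x\in Z_0$ would produce the nonzero point $(x,0)\in Z_1$, contradicting quasi-smoothness of $X_d^1$. I do not expect a genuine obstacle here: the only point deserving care is the clean decoupling of the new variable, namely that $\partial_{y_1}f_1=2y_1$ has zero locus exactly the hyperplane $\{y_1=0\}$, which is precisely what makes the gradient locus factor. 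The whole argument rests on correctly phrasing the Jacobian criterion (via Euler's identity) so that quasi-smoothness becomes a statement purely about the simultaneous vanishing of the partial derivatives on the cone.
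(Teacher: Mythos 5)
Your argument is correct and is essentially the paper's own proof written out in full: the paper's one-line observation that $J_{f_1}=(J_{f_0},y_1)$ is exactly your computation $\partial_{x_i}f_1=\partial_{x_i}f_0$, $\partial_{y_1}f_1=2y_1$, giving the product decomposition of the critical locus. Your additional care with the weighted Euler relation to phrase quasi-smoothness purely in terms of the vanishing of the partials is a welcome (and standard) clarification, but not a different approach.
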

\begin{proof}
Simply notice that $J_{f_1}=(J_{f_0}, y_1)$. Therefore the only possible new singularities of $X^1_d$ comes just from the new ambient space.
\end{proof}
Set $R_{f_0}=S/(J_{f_0})$ and similarly $R_{f_1}$ for $X^1_d$. It is clear that $R_{f_1} \cong S[y_1]/(J_{f_0}, y_1)$, therefore by the hyperplane section principle one has $R_{f_0} \cong R_{f_1}$.
We pick any general member $X^1_d=V(g) \subset \PP(a_0, \ldots, a_n, t)$. Since the partial derivatives form a regular sequence, one clearly has $\ddim (R_{f_1})_k= \ddim (R_{g})_k$, for all $k$ and we have an isomorphism of $\C$-vector spaces between the two Jacobian rings. Now we see that by completing the square the "double cover type" does not form a proper subfamily inside the space of all $X^1_d  \subset \PP(a_0, \ldots, a_n, t).$
Clearly the process can go on to give an infinite chain of double covers
\begin{equation}
\xymatrix{
&&&\cdots\ar[d]^-{\varphi_3}\\
&&X_d^2\ar[d]^-{\varphi_2}\ar@{^{(}->}[r]& w\PP(\underline{a},t,t)\\
&X_d^1\ar[d]^-{\varphi_1}\ar@{^{(}->}[r]& w\PP(\underline{a},t)&\\
X_d^0\ar@{^{(}->}[r] & w\PP(\underline{a})\ &&}
\end{equation}

where any $X^j_d$ is a double cover of the projective space $\PP(a_0, \ldots, a_n, t^{j-1})$ branched on $X^{j-1}_d$.
By hyperplane section principle
\begin{proposition} \label{jacobian} For any $X^j_d$ obtained with tower construction one has $R_{f_j} \cong R_{f_0}$.
\end{proposition}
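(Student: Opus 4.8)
The plan is to prove Proposition \ref{jacobian} by induction on the number of steps $j$ in the tower, using the single-step isomorphism $R_{f_0} \cong R_{f_1}$ established just above the statement together with the hyperplane section principle. The base case $j=0$ is trivially an identity, so the whole content lies in the inductive step: assuming $R_{f_{j-1}} \cong R_{f_0}$, I would show $R_{f_j} \cong R_{f_{j-1}}$.

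\smallskip

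First I would pin down exactly what $f_j$ looks like. By the tower construction, $X_d^j$ is the double cover of $\PP(a_0,\ldots,a_n,t^{j-1})$ branched over $X_d^{j-1}$, and since $d=2t$ is even, completing the square gives the explicit model $f_j = f_{j-1} + y_j^2$, where $y_j$ is the new coordinate of weight $t$ adjoined at the $j$-th step. Thus $f_j$ lives in the polynomial ring $S_j = S_{j-1}[y_j]$, and its partial derivatives are exactly those of $f_{j-1}$ (now viewed in $S_j$) together with $\partial f_j / \partial y_j = 2 y_j$. Consequently the Jacobian ideal factors as $J_{f_j} = (J_{f_{j-1}}, y_j)$, which is precisely the algebraic shape needed to invoke the hyperplane section principle, exactly as in the proof of the single-step lemma.

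\smallskip

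Next I would run the same argument that established $R_{f_0} \cong R_{f_1}$, but applied at level $j$. The element $y_j$ is a regular (homogeneous, non-zerodivisor) element of $R_{f_j} = S_j / J_{f_j}$, and quotienting by it yields $R_{f_j}/(y_j) \cong S_{j-1}[y_j]/(J_{f_{j-1}}, y_j) \cong S_{j-1}/J_{f_{j-1}} = R_{f_{j-1}}$. More directly, since $J_{f_j} = (J_{f_{j-1}}, y_j)$ already contains $y_j$, one simply has $R_{f_j} \cong S_{j-1}[y_j]/(J_{f_{j-1}}, y_j) \cong R_{f_{j-1}}$ as graded $\C$-algebras. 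Composing this with the inductive hypothesis $R_{f_{j-1}} \cong R_{f_0}$ gives $R_{f_j} \cong R_{f_0}$, closing the induction.

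\smallskip

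I expect the only genuine subtlety to be bookkeeping rather than a real obstacle: one must track that the weight assignment of the adjoined variables $y_1,\ldots,y_j$ (each of weight $t$) is consistent at every stage, so that $d=2t$ remains even and the square-completion model persists, and that quasi-smoothness is preserved so that the partial derivatives continue to form a regular sequence and $R_{f_j}$ stays finite-dimensional. This last point is guaranteed at each step by the preceding lemma (that $X_d^{j-1}$ is quasi-smooth if and only if $X_d^j$ is), so no new singularities enter the argument. The hardest conceptual ingredient is simply recognizing that the hyperplane section principle applies verbatim at every level because the Jacobian ideal always absorbs the new variable; once that pattern is identified, the induction is purely formal.
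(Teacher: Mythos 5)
Your proof is correct and follows essentially the same route as the paper, which simply iterates the single-step computation $J_{f_j}=(J_{f_{j-1}},y_j)$ and invokes the hyperplane section principle; your explicit induction just makes this iteration formal. (One small slip: $y_j$ is not a regular element of $R_{f_j}$ --- it is zero there, being in the Jacobian ideal --- but your ``more directly'' sentence gives the correct direct isomorphism, so nothing is lost.)
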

We point out that we can reverse the chain, and considering $X_d^1$ as an extension of $X_d^0$. Recall that, in general, $Y\subset \PP^{n+1}$ is said to be an \emph{extension} of $X \subset \PP^n$ if we have dim($X)=n$, dim$(Y)=n+1$ and there exists an immersion $i: \PP^n \to \PP^{n+1}$ such that $X=Y \cap i(\PP^n)$. 
In the weighted case of $X \subset w\PP(a_0, \ldots, a_n)$ we thus have to look of the several graded components $(T^1_{A_X})_{-a_i}$. Moreover, for the case of a hypersurface of degree $d$ we have $$T^1_{A_X} \cong R_f(d).$$ To sum up: given $X_d^0$ we also have $X_d^1$ as $$X_d^1=V(f_0+h_t\cdot y_1) \subset \PP(a_0, \ldots, a_n,t),$$ where $y_1$ is a new variable of weight $t$ and more important $$h_t \in (T^1_{A_{X_d^0}})_{-t}\cong(R_{f_0})_{d-t} \cong(R_{f_0})_{t}.$$ It is trivial to see that there is no difference between this and a double cover model.\\
Instead what is not clear a priori is what happens to the Hodge groups when we run up the tower.
\subsection{Periodic Patterns in Hodge Theory}
Let us call an \emph{even member} of the tower an $X_d^{2k}$ obtained by doing an even number of step in the construction above. Similarly we will define the \emph{odd members}. 
\begin{thm} \label{periodic}\begin{itemize}
\item Let $X_d^{2k}$ any even member of the tower, of dimension $n+2k$. We have that $$H^{n+2k}(X_d^{2k}, \C) \cong H^{n}(X_d^0, \C).$$ The isomorphism is compatible with the Hodge decomposition: in particular the central Hodge numbers of $X_d^0$ are the same of the Hodge numbers of $X_d^{2k}$ up to a degree $k$ shift, that is  $$\left( h^{n+2k,0}_{X^{2k}_d},h^{n+2k-1,1}_{X^{2k}_d},\dots,h^{1,n+2k-1}_{X^{2k}_d},h^{0,n+2k}_{X^{2k}_d}\right)=\left(0,\dots,0,h^{n,0}_{X^0_d},\dots,h^{0,n}_{X^0_d},0, \dots,0\right),$$ with $2k$ zeros on the last vector;
\item the same holds for odd members, with an equality between the Hodge numbers of $X_d^1$ and $X_d^{2k+1}$, for any $k$.
\end{itemize}
\end{thm}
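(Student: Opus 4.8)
The plan is to deduce the entire statement from the two facts already available: the graded-ring isomorphism $R_{f_j}\cong R_{f_0}$ of Proposition \ref{jacobian}, and the Griffiths--Steenbrink residue description, which reads every primitive Hodge piece of a quasi-smooth hypersurface off a single graded component of its Jacobian ring. The key observation is that the isomorphism of Proposition \ref{jacobian} is an isomorphism of graded $\C$-algebras: adding $y_j^2$ enlarges the Jacobian ideal by $y_j$, so $R_{f_j}=R_{f_{j-1}}[y_j]/(y_j)\cong R_{f_{j-1}}$, and this transports not only the dimensions of the graded pieces but also all their multiplication maps. The whole theorem then reduces to a degree count showing that, as one climbs the tower, the shift in cohomological degree produced by the jump in dimension is exactly cancelled by the jump in the sum of the weights, leaving a uniform Tate twist.

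Concretely, I would first record the two numerical parameters along the tower: $\dim X_d^{2k}=n+2k$ and, since each step adds one variable of weight $t=d/2$, $s_{2k}=\sum a_i+2k\cdot\tfrac d2=s_0+kd$. Plugging these into the residue formula $H^{p,q}_{\prim}(X)\cong (R_f)_{(q+1)d-s}$ gives
$$H^{p',\,n+2k-p'}_{\prim}(X_d^{2k})\cong (R_{f_{2k}})_{(n+2k-p'+1)d-s_0-kd}\cong (R_{f_0})_{(n+k-p'+1)d-s_0}.$$
Comparing with $H^{p,n-p}_{\prim}(X_d^0)\cong (R_{f_0})_{(n-p+1)d-s_0}$, the two graded components coincide precisely when $p'=p+k$. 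Thus the ring isomorphism identifies $H^{p,n-p}_{\prim}(X_d^0)$ with $H^{p+k,\,n-p+k}_{\prim}(X_d^{2k})$, which is exactly the Hodge shift by $k$ claimed in the statement and accounts for the $2k$ zeros (namely $k$ at each end of the Hodge vector). The identical computation, started from $X_d^1$ in place of $X_d^0$, settles the odd members.

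To upgrade this from an equality of Hodge numbers to an isomorphism of IVHS, I would invoke the identification $H^1(T_X)\cong (R_f)_d$, which holds in degree $d$ independently of the dimension, so the deformation space is literally the same graded piece for every member of the tower. The differential of the period map is then the multiplication $(R_f)_d\otimes (R_f)_a\to (R_f)_{a+d}$, and since Proposition \ref{jacobian} preserves multiplication, these maps are carried to one another under the identification above; this is what makes $\phi$ an isomorphism of IVHS compatible with the twist $[-2k]$. For the full middle cohomology one must finally add the non-primitive part: when $n$ is even there is a single Lefschetz class of type $(n/2,n/2)$, and under the shift it corresponds to the class of type $(n/2+k,n/2+k)$ on $X_d^{2k}$, i.e. its own Lefschetz class, so this summand matches as well (and is simply absent, by parity, along an odd chain).

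I expect the main obstacle to be conceptual rather than computational: the degree bookkeeping is routine once Proposition \ref{jacobian} is in hand, so the delicate point is to verify that the identification genuinely deserves to be called an isomorphism of IVHS. This requires checking that the graded-ring isomorphism really intertwines the cup-product/multiplication structure governing $\dd\wp$ and that the identification $H^1(T_X)\cong(R_f)_d$ is compatible across the tower; and, separately, that the Griffiths--Steenbrink machinery has been applied correctly (well-formedness of the weighted spaces, and the top/socle degree shifting consistently along the chain), so that the residual algebraic classes are accounted for and no spurious contributions appear in the intermediate cohomological degrees.
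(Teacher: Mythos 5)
Your proposal is correct and follows essentially the same route as the paper: the constancy of the Jacobian ring along the tower (Proposition \ref{jacobian}) combined with the Griffiths--Steenbrink identification $H^{p,q}_{\prim}\cong R_{(q+1)d-s}$ and the bookkeeping $s\mapsto s+t$ per step with $2t=d$, so that two steps shift the Hodge index by one; the paper simply carries out the computation for two steps and iterates, where you do the general $2k$-step count at once. Your additional remarks on the non-primitive Lefschetz classes and on the multiplicative (IVHS) compatibility are points the paper leaves implicit or defers to the remark following the theorem, and they are correct.
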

\begin{proof}This is just a careful analysis of the involved components of the Jacobian ring. Let us start from the base of our chain, $X^0_d \subset \PP(a_0, \ldots, a_n).$ We will denote as before $d=2t$ and $s=\sum a_i$ Since by Proposition \ref{jacobian} the Jacobian ring will be the same in any step of our construction, we will drop the subscript referring to the equation and denote it simply with $R$. By Griffiths-Steenbrink one has $$H^{n-p,p}_{\prim}(X_d^0)\cong R_{(p+1)d-s}$$
Now extend to $X_1^d \subset \PP(a_0, \ldots, a_n,t)$. By the same argument we will have
$$H^{(n+1)-p,p}_{\prim}(X_d^1)\cong R_{(p+1)d-s-t}.$$
If we extend another time we have
$$H^{(n+2)-p,p}_{\prim}(X_d^2)\cong R_{(p+1)d-s-2t} \cong R_{pd-s} \cong H^{n-p+1,p-1}_{\prim}(X_d^0),$$
where we set $H^{r,s}_{\prim}(X)=0$ if $r<0$ or $s<0$. On the other end the same yields for $X_d^1$ with
$$H^{(n+3)-p,p}_{\prim}(X_d^3)\cong R_{(p+1)d-s-3t} \cong R_{pd-s-t} \cong H^{(n+1)-p+1,p-1}_{\prim}(X_d^1).$$
Of course every time we perform a double extension we end up in the same graded components of the Jacobian ring, just with shift in the Hodge theory, exactly as concluded above.  \end{proof}
\begin{rmk} Note in particular that, since the degree of any member of the tower is constant, we will always have $H^1(T_{X^k_d})_{\text{proj}} \cong R_d$. We recall that the distinction between $H^1(T_{X^k_d})_{\text{proj}}$ and $H^1(T_{X^k_d})$ holds only in dimension $\leq 2$.
\end{rmk}
\begin{rmk} As showed for example in \cite{saito}, the Jacobian ring of a variety actually determines its (IVHS). Therefore what we have is indeed an isomorphism of IVHS $$\phi: H^{n}(X_d^{0}) \stackrel{\sim}{\longrightarrow} H^{n+2k}(X_d^{2k})[-2k].$$
\end{rmk}

\subsection{Gushel-Mukai-type geometry}
Let us now focus on our case, that is, Fano threefolds of index $>1$, and pick any $X_d \subset \PP(a_0, \ldots, a_n)$. To be in a tower one of the weights $a_i$ has to be $a_i=d/2$. If not, one can start running the game directly from $X_d=X_d^0$, adjoining a variable of half the weight of the degree. Of course if $d \equiv 1 \ (\mod 2)$ there is no hope of building any tower. \\ Looking at the table of Okada, it turns out that 30 families of Fano threefolds of index $>1$ lies in a tower. At the end of the section we include a table with all the towers, and the relevant Hodge groups in both even and odd case. Note that to be Fano of K3 type, they have to satisfy $t=\iota_X$, where $t$ is the covering variable as above. Among all, 4 of them are of K3-type, and we will focus our attention on these in particular. 
They corresponds to the families \begin{enumerate}
\item No. 97, $X_4 \subset \PP(1,1,1,1,2)$;
\item No. 107, $X_6 \subset \PP(1,1,2,2,3)$;
\item No. 116, $X_{10} \subset \PP(1,2,3,4,5)$;
\item No. 122, $X_{14} \subset \PP(2,3,4,5,7)$.
\end{enumerate}


\subsubsection{A non K3-type example}Before analysing the K3-type examples, let us do a non K3-one. Pick the family no.115, already considered before for being a counterexample to the infinitesimal Torelli problem.  Again, this is the family of index $\iota_X=5$, $X_6 \subset \PP(1,2,2,3,3)$. The starting point of this tower is therefore the curve $X_6^0 \subset \PP(1,2,2)$, with $\omega_{X_6^0} \cong \of_{X_6^0}(1)$. The Hilbert-Poincaré series of the Jacobian ring of  the tower is again
$$\textrm{HP}(R)=\prod \frac{(1-t^{d-a_i})}{(1-t^{a_i})}= 1+t+3t^2+3t^3+4t^4+3t^5+3t^6+t^7+t^8.$$
In particular any odd member will have an Hodge structure (coming from the one of a curve) concentrated in degree 1 and 7 (note that dim($R_1)$=dim($R_7$)=1), while any even member will have the Hodge structure in degree -2, 4, 6, with $R_2=R_6=0$ and dim($R_4$)=4. So, for example, the Fano threefold will have as Hodge diamond
 \[ \begin{matrix}
0 && 1 && 1 &&0& \\
&0 &&1&&0&\\
&&0&&0&&\\
&&&1 &&&
\end{matrix}\]
and the Fano fourfold will have
\[ \begin{matrix}

0 &&0 && 5 &&0 && 0 \\
& 0 && 0 && 0 &&0& \\
&&0 &&1&&0&&\\
&&&0&&0&&&\\
&&&&1 &&&&
\end{matrix}\]

We start now discussing the geometry of the GM-like varieties of the K3 type. 

\subsubsection{Tower on quartic double solid: a GM-like Fano of K3 type} \label{quarticdouble}
The first example of GM-like Fano that we are going to consider is $X_4\subset \PP(1^{4},2)$ with coordinates $x_0, \ldots, x_3,y_1$, already famous in literature as the quartic double solid. It is a double cover of $\PP^3$ ramified on a quartic (K3) surface. Note that the threefold itself is smooth (and not only quasi-smooth): in fact, the generic member of the family will have $y_1^2+ \ldots$, and therefore will avoid the coordinate point $P_4= [0,0,0,0,1]$.\\
An infinitesimal Torelli theorem for the quartic double solid was already established by Clemens (\cite{clemens}). Moreover, since it shares numerical coincidences (in particular, the dimension of the intermediate Jacobian) with the Gushel-Mukai Fano threefold of index 10, $Y_{10}$, Tyurin conjectured the existence of a birational isomorphism between $Y_{10}$ and $X_{4}$. This was just recently disproved by Debarre, Iliev, Manivel in \cite{debarreilievmanivel}.\\
If we consider the double cover $X_4^2$ of $\PP(1,1,1,1,2)$ branched on the quartic double solid (we use this notation because we consider the K3 surface as the base of the tower) we will have that the resulting variety will be quasi-smooth, acquiring  $2 \times \frac{1}{2}(1,1,1,1)$ points on the intersection with the weighted $\PP(2,2)$. If we compute the Hilbert-Poincaré series of the Jacobian ring of any member of the tower, this will be  $$
\textrm{HP}(R)=
1+4t+10t^2+16t^3+19t^4+16t^5+10t^6+4t^7+t^8.$$
The odd Hodge structure will be concentrated in degrees 2 and 6 (with $R_2\cong R_6$ both 10 dimensional), while the even Hodge structure will be in degrees $0, 4, 0$. It follows that for example the Fano threefold will have as Hodge Diamond
 \[ \begin{matrix}
0 && 10 && 10 &&0& \\
&0 &&1&&0&\\
&&0&&0&&\\
&&&1 &&&
\end{matrix}\]
and the Fano fourfold will have
\[ \begin{matrix}

0 &&1 && 20 &&1 && 0 \\
& 0 && 0 && 0 &&0& \\
&&0 &&1&&0&&\\
&&&0&&0&&&\\
&&&&1 &&&&
\end{matrix}\]
\noindent
and the same for every even (and odd) dimension. Note that even in higher dimension the period map will depend only by the K3 structure (see for example \cite{debarre2}): in particular any member of the tower will satisfy the Torelli property. A further result regards the rationality property: while it is known that a smooth quartic double solid is irrational (\cite{voisin}), the same does not hold in higher dimensions. We prove in fact
 
\begin{proposition}\label{razraz} Let $X^j_4 \subset \PP(1^4, 2^j)$ a quasi-smooth member of the tower of quartic double space with dim $X^j_4 \geq 4$. Then $X^j_4$ is rational. 
\end{proposition}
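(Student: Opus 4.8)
The plan is to prove rationality of the higher-dimensional members of the quartic double space tower by exhibiting each such $X^j_4$ as a double cover whose branch structure becomes reducible, allowing us to split off a rational component. First I would write down the explicit model: a general member is $X^j_4 = V(f_0 + y_1^2 + y_2^2 + \cdots + y_j^2) \subset \PP(1^4, 2^j)$, where $f_0 \in \C[x_0,\dots,x_3]$ is the defining quartic of the base K3 surface and the $y_i$ are the covering variables of weight $2$. The key observation is that once $j \geq 2$ (so $\dim X^j_4 \geq 4$), we have at least two weight-$2$ coordinates, and the quadratic form $\sum_{i=1}^{j} y_i^2$ in these variables is nondegenerate of rank $\geq 2$.

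The main idea is then a \emph{projection / conic-bundle} argument. Fixing generic values of the $x_i$, the fiber equation $\sum y_i^2 = -f_0(x)$ is a smooth quadric (a conic bundle when $j=2$, a higher quadric bundle in general) over the base $\PP^3$, or rather over the weighted locus in the $x$-variables. A quadric hypersurface or quadric bundle that admits a rational section over a rational base is itself rational; the existence of such a section follows because the quadratic part $\sum y_i^2$ has a rational point over the function field $\C(x_0,\dots,x_3)$ of the rational base $\PP^3$ as soon as the rank is large enough, which is exactly guaranteed by $j \geq 2$. Concretely, one can solve $y_j$ in terms of the remaining coordinates over the generic point, giving a birational map to a product or to projective space. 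I would carry out the steps in this order: (i) record the explicit Fermat-type equation of a general quasi-smooth member after completing squares; (ii) identify the map to $\PP^3$ (or to the weighted space in the $x$-variables) as a quadric bundle; (iii) invoke the standard fact that a quadric bundle with a rational section over a rational base is rational, checking that the section exists precisely when $j \geq 2$.

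The hard part will be dealing carefully with the weighted structure and the quasi-smooth singularities, rather than the rationality mechanism itself. Specifically, one must verify that the $\frac{1}{2}(1,1,1,1)$-type quotient singularities acquired along the intersection with the weighted subspaces do not obstruct the birational construction: since rationality is a birational invariant and these singular loci have high codimension, a general member is rational if and only if a resolution is, so I would argue that the conic/quadric-bundle birational map can be chosen to avoid the singular strata on a dense open set. A clean way to finesse this is to work generically at the level of function fields, where $\C(X^j_4)$ is seen directly to be a purely transcendental extension of $\C$ by solving the quadratic relation; this sidesteps the singularities entirely since birational geometry is insensitive to them. The remaining bookkeeping — confirming $\dim X^j_4 = 3 + j$ and that $j \geq 2$ is equivalent to the hypothesis $\dim X^j_4 \geq 4$ — is routine.
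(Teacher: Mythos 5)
Your proposal is essentially the paper's own argument: the paper normalizes a quasi-smooth member to $f(x_0,\dots,x_3,y_1,\dots,y_{j-2})+y_{j-1}y_j=0$ (your $y_{j-1}^2+y_j^2$ after the substitution $y_{j-1}\pm i\,y_j$, available exactly when $j\geq 2$) and then parametrizes the chart $y_j=1$ by affine space by solving for $y_{j-1}$, which is precisely your ``quadric bundle over a rational base with a rational section'' run at the level of function fields. Two small corrections: $\dim X^j_4=j+2$ (not $j+3$), so $j\geq 2$ is indeed equivalent to $\dim X^j_4\geq 4$; and one cannot literally solve for $y_j$ from $\sum y_i^2=-f_0$ --- the rational section only appears after the hyperbolic change of coordinates, which is exactly where the hypothesis $j\geq 2$ is used.
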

\begin{proof} Indeed any quasi-smooth $X^j_4 \subset \PP(1^4, 2^j)$ is given up to  $\PP(1^4, 2^j)$-automorphism by an equation like:
$$
X^j_4: =( f(x_0,x_1,x_2,x_3,y_1,\ldots,y_{j-2})+y_{j-1}y_j=0)
$$
\noindent
where $f(x_0,x_1,x_2,x_3,y_1,\ldots,y_{j-2})$ is a quasi-smooth polynomial of degree $4$. Now let us consider the open sub-scheme $U_{j}\hookrightarrow \PP(1^4, 2^j)$ given by $y_j=1$. 
We set $V_j:= U_j\cap X^j_4$. By definition there exists a birational morphism between the affine variety $\mathbb A^{j-2}_{x_0,x_1,x_2,x_3,y_1,\ldots,y_{j-2}}\dashrightarrow V_j$ given by 
$$\phi\colon (x_0,x_1,x_2,x_3,y_1,\ldots,y_{j-2})\mapsto (x_0,x_1,x_2,x_3,y_1,\ldots,y_{j-2}, f(x_0,x_1,x_2,x_3,y_1,\ldots,y_{j-2})).
$$
\end{proof}

\subsubsection{A second example of GM-like Fano of K3 type}
The other example of GM-like Fano of K3 type that we want to investigate is the family no.122, $X_{14} \subset \PP(2,3,4,5,7)$. This is particularly interesting, since is the only GM-like of K3 type that represents as well a counterexample to the infinitesimal Torelli problem. We have already investigated the Hilbert-Poincaré series of the Jacobian ring, this being $$HP(R)=1+t^2+t^3+2t^4+2t^5+3t^6+3t^7+5t^8+4t^9+6t^{10}+$$$$+5t^{11}+7t^{12}+6t^{13}
      +7t^{14}+6t^{15}+7t^{16}+5t^{17}+$$ $$+6t^{18}+4t^{19}+5t^{20}+3t^{21}+3t^{22}+2t^{23}+2t
      ^{24}+t^{25}+t^{26}+t^{28}.$$
 Here we see that the Hodge theory in even dimension is concentrated in degree $1,14$, while in odd dimension in degree $7, 21$.
 It follows that for example the Fano threefold will have as Hodge Diamond
 \[ \begin{matrix}
0 && 3 && 3 &&0& \\
&0 &&1&&0&\\
&&0&&0&&\\
&&&1 &&&
\end{matrix}\]
and the Fano fourfold will have
\[ \begin{matrix}

0 &&1 && 8 &&1 && 0 \\
& 0 && 0 && 0 &&0& \\
&&0 &&1&&0&&\\
&&&0&&0&&&\\
&&&&1 &&&&
\end{matrix}\]
with $8$ being exactly the Picard rank of the K3 surface that is the step 0 of the tower. A curious behaviour appears here: one has in fact

\begin{thm}\label{excounterex}Any odd dimensional member $X_{14}^{2k+1} \subset \PP(2,3,4,5,7^{2k+1})$ will be of Anti-Torelli type, while any even dimensional member  $X_{14}^{2k} \subset \PP(2,3,4,5,7^{2k})$ will be of Torelli type. In particular we have an infinite chain of examples and counterexamples for the Torelli problem, with alternate dimensions.
\end{thm}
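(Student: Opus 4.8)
The plan is to reduce the infinitesimal Torelli question for every member of the tower to a single multiplication map in the (common) Jacobian ring and then to split the argument by parity. First I would record that quasi-smoothness propagates up the tower — iterating the lemma that $X^j_{14}$ is quasi-smooth if and only if $X^{j-1}_{14}$ is — so that the Griffiths--Steenbrink description applies at every stage, and that by Proposition \ref{jacobian} the Jacobian ring $R$ is literally the same at each level. Writing $d=14$ and $s_m=14+7m$ for the weight sum of $X_{14}^m\subset \PP(2,3,4,5,7^m)$ (which has dimension $2+m$), the differential of the period map of $X_{14}^m$ is the sum of the multiplication maps $R_{14}\to \Hom(R_{(p+1)d-s_m},R_{(p+2)d-s_m})$, one for each Hodge level $p$ whose source and target are both nonzero. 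The whole point is that these graded pieces, and hence the maps, are controlled entirely by the fixed ring $R$.

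For the odd members $m=2k+1$ (dimension $3+2k$), Theorem \ref{periodic} identifies the primitive Hodge structure with the $k$-fold shift of that of the Fano threefold $X_{14}^1$ (family no.\ 122), so the only nonzero primitive pieces occur at $p=k+2$ and $p=k+1$; a direct degree count gives $H^{k+2,k+1}_{\prim}\cong R_7$ and $H^{k+1,k+2}_{\prim}\cong R_{21}$. Consequently the entire period differential collapses to the single map $R_{14}\to \Hom(R_7,R_{21})$ — exactly the map analysed for no.\ 122. I would then invoke Theorem \ref{antitorelli}, which exhibits the nonzero kernel element $x_0x_1x_2x_3$, to conclude that infinitesimal Torelli fails for every odd member.

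For the even members $m=2k$ (dimension $2+2k$), the same bookkeeping places the nonzero primitive pieces at $p=k+2,k+1,k$, corresponding to $R_0$, $R_{14}$, $R_{28}$ — the K3 pattern of $X_{14}^0$, shifted. The key observation is that one of the two surviving components is $R_{14}\to \Hom(R_0,R_{14})$, sending $P$ to the homomorphism $\xi\mapsto P\xi$; evaluating the image at $1\in R_0=\C$ returns $P$, so this component is injective. Since injectivity of one component forces injectivity of the whole differential, infinitesimal Torelli holds for every even member. (For $k=0$ one is back at the weighted K3 surface, where $H^1(T)_{\mathrm{proj}}\cong R_{14}$ and the same unit-multiplication argument recovers the classical K3 statement for polarized deformations.) Finally Lemma \ref{genericity} upgrades these conclusions from the chosen quasi-smooth member to an open dense subfamily, and the advertised alternating infinite chain of examples and counterexamples is then immediate.

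I expect the only genuinely delicate point to be the degree bookkeeping that pins down precisely which graded slices of $R$ carry the primitive cohomology of $X_{14}^m$ and, crucially, the verification that for odd $m$ no third Hodge piece survives that could rescue injectivity through a different component. Everything else — the reduction to no.\ 122 on the odd side, and the \emph{free} injectivity coming from $R_0$ on the even side — follows formally once this single-component structure is in place.
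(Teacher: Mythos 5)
Your proposal is correct and follows essentially the same route as the paper: the odd case reduces, via the shared Jacobian ring and the degree bookkeeping $H^{\prim}\cong R_7,R_{21}$, to the anti-Torelli dimension count already done for family no.\ 122, while the even case rests on the trivial injectivity of $R_{14}\to\Hom(R_0,R_{14})$. The only (harmless) deviation is your final appeal to Lemma \ref{genericity}: both arguments are dimension counts valid for every quasi-smooth member, so no genericity is needed and the statement holds for all of them, as claimed.
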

\begin{proof}
The failure in any odd dimension follows from the same reasons explained in \ref{antitorelli}. On the other hand in even dimension one has to check the (trivial) injectivity of the map $$ \dd \wp: R_{14} \longrightarrow \Hom(R_0, R_{14} \oplus\C) \cong \Hom(\C, R_{14} \oplus\C).$$
The result follows immediately then.
\end{proof}
This type of construction can be obviously performed for any quasi-smooth hypersurface we considered before. However, we decided to focus in full details only in these three examples, since they shared the most interesting geometric properties. In particular we plan to continue the study of  these GM-like Fano varieties of K3-type in a forthcoming work. As an example of our computations, we include now the complete list of towers coming out from the Fano threefolds of index >1.
\newpage
\noindent\begin{minipage}{\linewidth}
   \centering 
\begin{tabular}{|c|c|c|c|}
\hline 
No & $X_d \subset \mbP (a_0,\dots,a_4)$ & Odd &  Even  \\ 
\hline
97& $X_4\subset \mbP (1,1,1,1,2)$ & 0,10,10,0& 0,1,20,1,0\\ 
98& $X_6\subset \mbP (1,1,1,2,3)$ & 0,21,21,0& 0,3,37,3,0\\ 
99& $X_{10}\subset \mbP (1,1,2,3,5)$ & 0,38,38,0& 0,7,63,7,0\\ 
100& $X_{18}\subset \mbP (1,2,3,5,9)$ & 0,49,49,0& 0,10,79,10,0\\ 
101& $X_{22}\subset \mbP (1,2,3,7,11)$ & 0,65,65,0& 0,14,103,14,0\\
102& $X_{26}\subset \mbP (1,2,5,7,13)$ & 0,66,66,0& 0,14,105,14,0\\
103& $X_{38}\subset \mbP (2,3,5,11,19)$ & 0,45,45,0& 0,10,71,10,0\\   
104& $X_{2}\subset \mbP (1,1,1,1,1)$ & 0,0,0,0& 0,0,2,0,0\\ 
106& $X_{4}\subset \mbP (1,1,1,2,2)$ & 0,3,3,0& 0,0,8,0,0\\ 
107& $X_{6}\subset \mbP (1,1,2,2,3)$ & 0,8,8,0& 0,1,17,1,0\\ 
108& $X_{12}\subset \mbP (1,2,3,4,5)$ & 0,19,19,0& 0,3,33,3,0\\ 
111& $X_{4}\subset \mbP (1,1,1,2,3)$ & 0,1,1,0& 0,0,4,0,0\\ 
112& $X_{6}\subset \mbP (1,1,2,3,3)$ & 0,4,4,0& 0,0,9,0,0\\ 
113& $X_{4}\subset \mbP (1,1,2,2,3)$ & 0,0,0,0& 0,0,2,0,0\\ 
114& $X_{6}\subset \mbP (1,1,2,3,4)$ & 0,2,2,0& 0,0,6,0,0\\ 
115& $X_{6}\subset \mbP (1,2,2,3,3)$ & 0,1,1,0& 0,0,5,0,0\\ 
116& $X_{10}\subset \mbP (1,2,3,4,5)$ & 0,6,6,0& 0,1,13,1,0\\ 
118& $X_{6}\subset \mbP (1,1,2,3,5)$ & 0,1,1,0& 0,0,3,0,0\\ 
119& $X_{6}\subset \mbP (1,2,2,3,5)$ & 0,0,0,0& 0,0,4,0,0\\ 
120& $X_{6}\subset \mbP (1,2,3,3,4)$ & 0,0,0,0& 0,0,2,0,0\\ 
121& $X_{8}\subset \mbP (1,2,3,4,5)$ & 0,1,1,0& 0,0,4,0,0\\ 
122& $X_{14}\subset \mbP (2,3,4,5,7)$ & 0,3,3,0& 0,1,8,1,0\\ 
123& $X_{6}\subset \mbP (1,2,3,3,5)$ & 0,0,0,0& 0,0,1,0,0\\ 
124& $X_{10}\subset \mbP (1,2,3,5,7)$ & 0,2,2,0& 0,0,5,0,0\\ 
125& $X_{12}\subset \mbP (1,3,4,5,7)$ & 0,3,3,0& 0,0,7,0,0\\ 
126& $X_{6}\subset \mbP (1,2,3,4,5)$ & 0,0,0,0& 0,0,2,0,0\\ 
127& $X_{12}\subset \mbP (2,3,4,5,7)$ & 0,1,1,0& 0,0,3,0,0\\ 
128& $X_{12}\subset \mbP (1,4,5,6,7)$ & 0,1,1,0& 0,0,3,0,0\\ 
129& $X_{10}\subset \mbP (2,3,4,5,7)$ & 0,0,0,0& 0,0,3,0,0\\ 
130& $X_{12}\subset \mbP (3,4,5,6,7)$ & 0,0,0,0& 0,0,1,0,0\\ 
\hline
\end{tabular}
\captionof{table}{Towers from Fano threefolds of index >1}
  \label{table2} 
\end{minipage}

\section{Acknowledgements}
The authors are members of INdAM--GNSAGA.

This research is supported by MIUR funds, PRIN project \lq\lq Geometry of Algebraic Varieties 2016--2018\rq\rq\ (2015EYPTSB - PE1), by DIMA project Geometry PRID ZUCCONI 2017 and by University of Udine, DIMA, XXIX-Ph.D.-programme.

The authors would like to thank Miles Reid for his useful comments.




\end{document}